\title{Extension DGAs}
\author{Haldun Özgür Bayındır}
\address{Department of Mathematics,
Institut Galilée,
University Paris 13,
99 av. JB Clément,
FR-93430 Villetaneuse, France}
\email{ozgurbayindir@gmail.com}
\NewDocumentCommand{\tens}{t_}
 {%
  \IfBooleanTF{#1}
   {\tensop}
   {\otimes}%
 }
\NewDocumentCommand{\tensop}{m}
 {%
  \mathbin{\mathop{\otimes}\displaylimits_{#1}}%
 }
\newcommand*{\rom}[1]{\expandafter\@slowromancap\romannumeral #1@}
\newcommand{\z}{\mathbb{Z}}
\newcommand{\Z}{\mathbb{Z}}
\newcommand{\F}{\mathbb{F}}
\newcommand{\DGA}{\mathcal DGA}
\newcommand{\I}{\mathbb{I}}
\newcommand{\ic}{\mathbb{I}_{\mathcal{C}}}
\newcommand{\id}{\mathbb{I}_{\mathcal{D}}}
\newcommand{\dsa}{\mathcal{A}_*}
\newcommand{\lv}{\lvert}
\newcommand{\wdg}{\wedge}
\newcommand{\rv}{\rvert}
\newcommand{\fp}{\mathbb{F}_p}
\newcommand{\sph}{\mathbb{S}}
\newcommand{\cc}{\mathcal{C}}
\newcommand{\dd}{\mathcal{D}}
\newcommand{\hfp}{H\mathbb{F}_p}
\newcommand{\hz}{H\mathbb{Z}}
\newcommand{\hfps}{H{\mathbb{F}_p}_*}
\DeclareMathOperator{\thh}{\ensuremath{\textup{THH}}}
\newcommand{\pis}{\pi_*}
\newcommand{\cof}{\rightarrowtail}
\newcommand{\lcof}{\leftarrowtail}
\newcommand{\we}{\smash{\rlap{\kern 6pt\raise 4pt\hbox{\footnotesize $\sim$}}}\rightarrow}
\newcommand{\bwe}{\smash{\rlap{\kern 8.5pt\raise 4pt\hbox{\footnotesize $\sim$}}}\longleftarrow}
\def\longfib{\DOTSB\relbar\joinrel\twoheadrightarrow}
\newcommand{\trfib}{\smash{\rlap{\kern 7pt\raise 4pt\hbox{\footnotesize $\sim$}}}\longfib}
\newcommand{\trcof}{\smash{\rlap{\kern 5.5pt\raise 4pt\hbox{\footnotesize $\sim$}}}\cof}
\newcommand{\alg}{\mbox{-} \mathcal Alg}
\newcommand{\calg}{ \mbox{-} c\mathcal Alg}
\newcommand{\Sp}{\mathbb{S}}
\mathchardef\mhyphen="2D
\newtheorem{theorem}{Theorem}[section]
\newtheorem{lemma}[theorem]{Lemma}
\newtheorem{corollary}[theorem]{Corollary}
\newtheorem{proposition}[theorem]{Proposition}
\theoremstyle{definition}
\newtheorem{definition}[theorem]{Definition}
\newtheorem{example}[theorem]{Example}
\newtheorem{remark}[theorem]{Remark}
\newtheorem{construction}[theorem]{Construction}
\def\co{\colon\thinspace}
\date{}
\begin{document}
\title{Extension DGAs and topological Hochschild homology}
\maketitle
\begin{abstract}
     In this work, we study those differential graded algebras (DGAs) that arise from ring spectra through the extension of scalars functor. Namely, we study  DGAs whose corresponding Eilenberg-Mac Lane ring spectrum is equivalent to $H\mathbb{Z} \wedge E$ for some ring spectrum $E$. We call these DGAs extension DGAs. We also define and study this notion for $E_\infty$ DGAs. 
     
     The topological Hochschild homology (THH) spectrum of an extension DGA splits in a convenient way. We show that formal DGAs with nice homology rings are extension and therefore their THH groups can be  obtained from their Hochschild homology groups in many cases of interest. We also provide interesting examples of DGAs that are not extension.
     
     In the second part, we study properties of extension DGAs. We show that in various cases, topological equivalences and quasi-isomorphisms agree for extension DGAs. From this, we obtain that dg Morita equivalences and Morita equivalences also agree in these cases. 
\end{abstract}
\footnotesize{\textit{\textbf{Keywords:}} Differential graded algebras, Ring spectra, Topological Hochschild homology}

\footnotesize\textit{\textbf{MSC:}}55U99; 55P43; 18G35 
 \normalsize
\section{introduction}

    In \cite{stanley1997dissertation}, Stanley shows that the homotopy category of differential graded algebras is equivalent to the homotopy category of $H\Z$-algebras. Later, Shipley improves this equivalence to a zig-zag of Quillen equivalences between the model categories of DGAs and $H\Z$-algebras \cite{Shipley2007DGAs}. This opens up a new opportunity to study DGAs, i.e.\ to study DGAs using ring spectra. 
    
    Dugger and Shipley use this zig-zag of Quillen equivalences to define new equivalences between DGAs called  topological equivalences, see Definition \ref{def topeq} below. They show non-trivial examples of topologically equivalent DGAs and they use topological equivalences to develop a Morita theory for DGAs \cite{dugger2007topologicalequiv}. In \cite{bayindir2019dgaswithpolynomial}, the author uses topological equivalences to obtain classification results for DGAs. Moreover, topological equivalences for $E_\infty$ DGAs are studied by the author in \cite{bayindir2018topeqeinfty}.
    
    In this work, we follow this philosophy in a different way. We study what we call extension DGAs which are the DGAs that are obtained from ring spectra through the extension of scalars functor from $\Sp$-algebras to $H\Z$-algebras, i.e.\ the functor $H\Z \wedge -$. More generally, we work in $R$-DGAs for a discrete commutative ring $R$. There is a zig-zag of Quillen equivalences between $R$-DGAs and $HR$-algebras \cite{Shipley2007DGAs}. 
    
    \begin{definition} \label{def extension}
    An $R$-DGA $X$ is \textit{$R$-extension} if the $HR$-algebra corresponding to $X$ is weakly equivalent to $HR \wedge E$ for some cofibrant $\Sp$-algebra $E$. For $R=\Z$, we omit $\Z$ and write extension instead of $\Z$-extension. 
     \end{definition}
    To define $R$-extension $E_\infty$ $R$-DGAs, we use the zig-zag of Quillen equivalences between $E_\infty$ $R$-DGAs and commutative $HR$-algebras constructed in \cite{richter2017algebraicmodel}.
    
    \begin{definition} \label{def einfty extension}
     An $E_\infty$ $R$-DGA $X$ is \textit{$R$-extension} if the commutative $HR$-algebra corresponding to $X$ is weakly equivalent to $HR \wedge E$ for some cofibrant commutative $\Sp$-algebra $E$. For $R=\Z$, we omit $\Z$ and write extension instead of $\Z$-extension. 
     \end{definition}
     See Appendix A for a discussion on the compatibility of the two definitions above.
     
    One reason to study extension DGAs is that for an $R$-extension DGA $X$, there is  the following splitting at the level of  spectra
\begin{equation*}    \label{eq thh splitting}
\thh(X) \simeq \thh(HR) \wedge_{HR} \text{HH}^R(X)
     \end{equation*}
     where $\text{HH}^R$ denotes $\thh^{HR}$, see  \cite[Theorem 1]{vogt1998adjoining} for an instance of this splitting when $X$ is the Eilenberg-Mac Lane spectrum of a discrete ring. If $X$ is an $R$-extension $E_\infty$ $R$-DGA, then this splitting is a splitting of commutative ring spectra. This follows from the fact that THH commutes with smash products and the base change formula for THH, see \cite[Conventions]{krause2019b}.
     
     This splitting simplifies THH calculations significantly in many situations. Indeed, it is an important stepping stone in many THH calculations in the literature, particularly for the case where $X$ is a discrete ring, i.e.\ a DGA whose homology is concentrated in degree 0. For example, Larsen and Lindenstrauss show that this splitting exists at the level of homotopy groups for various discrete rings of characteristic $p$ \cite{lindenstrauss2000thhofalgebras}. Furthermore, Hesselholt and Madsen prove such a splitting for discrete rings that have a nice basis with respect to the ground ring $R$  \cite[Theorem 7.1]{hesseholt1997ktheoryofwitt}. In the following theorem,  we generalize this result to connective formal DGAs. Note that a connective DGA is a DGA whose negative homology is trivial. 
     
\begin{theorem}\label{thm formal are extension}
Let $X$ be a connective formal $R$-DGA whose homology has a homogeneous basis as an $R$-module containing the multiplicative unit such that the multiplication of two basis elements is either zero or a basis element. In this situation, $X$ is $R$-extension. As a result, we have the following equivalence of spectra.
\[\thh(X) \simeq \thh(HR) \wedge_{HR} \textup{HH}^R(X)\]
\end{theorem}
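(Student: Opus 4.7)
My plan is to construct an explicit $\Sp$-algebra $E$ directly from the multiplicative basis of $H_*(X)$, show that its base change $HR \wedge E$ models the $HR$-algebra corresponding to $X$, and then appeal to the general framework to obtain the THH splitting. The first step is to encode the basis combinatorics as a graded pointed monoid $\Pi$: take the elements of $\Pi$ to be the basis elements of $H_*(X)$ together with an adjoined absorbing basepoint, graded by the homogeneous degrees (which are non-negative by the connectivity hypothesis). Declare $b_i \cdot b_j = b_k$ when $b_i b_j = b_k$ in $H_*(X)$ and $b_i \cdot b_j = 0$ otherwise; the hypothesis that the multiplicative unit is a basis element provides a unit for $\Pi$, while associativity is inherited from $H_*(X)$.

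Next, define the monoid spectrum $E = \Sp[\Pi] = \bigvee_i S^{|b_i|}$ with multiplication whose $(b_i, b_j)$-component is either the inclusion of the $(b_i \cdot b_j)$-summand or the constant map to the basepoint, according to $\Pi$. This makes $E$ an $\Sp$-algebra, cofibrant as an $\Sp$-module since it is a wedge of spheres. Using the equivalence $HR \wedge E \simeq \bigvee_i \Sigma^{|b_i|} HR$ of $HR$-modules, a direct calculation yields $\pi_*(HR \wedge E) \cong H_*(X)$ as graded $R$-algebras: the additive structure is immediate, while the multiplicative structure matches by construction of $\Pi$.

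Finally, I identify $HR \wedge E$ with the $HR$-algebra corresponding to $X$ under Shipley's zig-zag. Since $X$ is formal, this reduces to identifying $HR \wedge E$ with the $HR$-algebra of $H_*(X)$ regarded as a DGA with zero differential. The underlying $HR$-module of $HR \wedge E$ corresponds, under the module-level Shipley equivalence, to the chain complex $H_*(X)$ with zero differential; by monoidality of the zig-zag, this lifts to a comparison of $HR$-algebras and $R$-DGAs with matching multiplication on homotopy/homology, pinning down $HR \wedge E$ as the $HR$-algebra of $H_*(X)$. Then $X$ is $R$-extension by Definition \ref{def extension}, and the THH splitting follows from the discussion preceding the theorem. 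I expect the main obstacle to be this last identification: carefully tracking the multiplicative structure through Shipley's zig-zag of Quillen equivalences, leveraging the monoidality of the Shipley functors and the rigidity of DGAs with zero differential.
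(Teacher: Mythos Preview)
Your proposal is correct and follows essentially the same approach as the paper: encode the basis as a graded pointed monoid, form the wedge-of-spheres $\Sp$-algebra $E = \vee_m \Sigma^{|m|}\Sp$, and identify $HR \wedge E$ with the $HR$-algebra corresponding to $X$ by tracking through Shipley's zig-zag. The paper carries out your anticipated obstacle by constructing explicit monoid weak equivalences at each stage of the zig-zag---in particular for the weak monoidal pair $(L,\phi^*N)$, where one cannot simply invoke strong monoidality---rather than any ``matching homotopy rings'' or rigidity argument (which would not suffice on its own); note also that Definition~\ref{def extension} asks for a cofibrant $\Sp$-\emph{algebra}, so you must cofibrantly replace $E$ in $\Sp$-algebras and check that $HR \wedge -$ still yields a weak equivalence, as the paper does.
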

Section \ref{sec formaldgas to hzalgebras} is devoted to the proof of this theorem. Furthermore, for a given $R$-DGA that satisfies the hypothesis of the theorem above, we provide an explicit description of the corresponding $HR$-algebra; see Proposition \ref{prop formal hzalg}. The author and Moulinos show that for such $HR$-algebras, one often obtains non-trivial splittings at the level of topological negative cyclic homology and topological periodic homology  \cite[4.8 and 6.1]{bayindir2020algkthryofTHHFp}. Using these splittings, the author and Moulinos compute the algebraic K-theory of $\text{THH}(H\F_p)$, i.e.\ the algebraic K-theory of  the formal DGA with homology $\fp[x_2]$. In a future work, the author is planning to compute the algebraic K-theory groups of various formal DGAs by using  Proposition \ref{prop formal hzalg} and the splittings provided in \cite{bayindir2020algkthryofTHHFp}. 
\begin{remark}\label{rem on graded monoid rings}
Another way to state the hypothesis of  Theorem \ref{thm formal are extension} is the following. Let $M$ be a monoid in the category of  graded pointed sets. From $M$, one obtains a graded $R$-algebra $R \langle M \rangle$ whose underlying $R$-module is the free $R$-module over the graded set $M_{-}$ obtained by removing the based point from $M$. The multiplication on $R \langle M \rangle$ is given by the multiplication on $M$ where the based point of $M$ is considered as the zero element in $R \langle M \rangle$. A graded $R$-algebra of the form $R \langle M \rangle$ is called a graded monoid $R$-algebra. With this definition, a connective formal $R$-DGA satisfies the hypothesis of Theorem \ref{thm formal are extension} if and only if its homology is a graded monoid $R$-algebra.
\end{remark}

\begin{remark}
We mention a few examples of graded rings that satisfy the hypothesis of the  theorem above as homology of $X$. The polynomial algebra over $R$ with a non-negatively graded set $S$ of generators $R[S]$ satisfies the hypothesis if all the elements of $S$ are in even degrees. The basis of $R[S]$ is given by the monomials in $S$ and the unit $1 \in R$. Similarly, many examples of quotients of  polynomial rings with even degree generators  also satisfy this hypothesis; for example $R[x]/(x^2)$, $R[x,y]/(y^2)$ and $R[x,y]/(x^2y,y^3)$ with even $\lv x \rv$ and $\lv y \rv$. However, there are rings that do not satisfy this hypothesis. For example for $R= \Z$, the exterior algebra on two generators $\Lambda[x,y] \cong \Lambda[x] \otimes \Lambda[y]$ with odd $\lvert x \rvert$ and $\lvert y \rvert$ has a basis given by $\{ x, y, xy \}$, but $yx = -xy$ and therefore $yx$ is not one of the basis elements. Indeed, $\Lambda[x,y]$ has no basis that satisfies this hypothesis.
\end{remark}

    We prove the following non-extension results. 

\begin{theorem}(Theorem \ref{thm 1 restated}) \label{thm 1}
Let $Y$ be an $E_\infty$ DGA. If $Y$ is quasi-isomorphic to an $E_\infty$ $\F_p$-DGA then $Y$ is not an extension $E_\infty$ DGA.
\end{theorem}

\begin{theorem}(Theorem \ref{thm nonext for p is 2 restated})\label{thm nonext for p is 2} Let $X$ be a DGA. If $X$ is quasi-isomorphic to an $\F_2$-DGA then $X$ is not an extension DGA.

\end{theorem}
\begin{remark}
These theorems should be compared with the two commutative $H\Z$-algebras $X$ and $Y$ obtained from $H\Z \wdg \hfp$ through the structure maps 
\[\hz \cong \hz \wdg \sph  \to \hz \wdg \hfp\]
and 
\[\hz \cong \sph \wdg \hz \to \sph \wdg \hfp \to \hz \wdg \hfp\]
respectively. The $E_\infty$ DGA corresponding to $X$ is an extension $E_\infty$ DGA and the $E_\infty$ DGA corresponding to $Y$ is an $E_\infty$ $\fp$-DGA. Although these two $E_\infty$ DGAs are $E_\infty$ topologically equivalent, they are not quasi-isomorphic due to Theorem 5.3 in \cite{bayindir2018topeqeinfty}.  For the associative case with $p=2$, the distinction between the two DGAs corresponding to $X$ and $Y$ is due to Example 5.6 in \cite{dugger2007topologicalequiv}.
\end{remark}

In the results above, we work with ($E_\infty$) DGAs in mixed characteristic, i.e.\ we work in ($E_\infty$) $\Z$-DGAs. A natural question to ask is if there are examples of $E_\infty$ $k$-DGAs that are not $k$-extension for a field $k$. In Example \ref{example of non extension} below, we show that there are $E_\infty$ $\F_p$-DGAs that are not $\F_p$-extension.

Now we discuss topological equivalences of DGAs and the properties of extension DGAs regarding topological equivalences.

\begin{definition} \label{def topeq}
Two DGAs $X$ and $Y$ are \textit{topologically equivalent} if the corresponding $H\Z$-algebras $HX$ and $HY$ are weakly equivalent as $\Sp$-algebras. 
\end{definition}

The definition of $E_\infty$ topological equivalences is as follows.

\begin{definition} \label{def einfty topeq}
Two $E_\infty$ DGAs $X$ and $Y$ are \textit{$E_\infty$ topologically equivalent} if the corresponding \textit{commutative} $H\Z$-algebras $HX$ and $HY$ are weakly equivalent as \textit{commutative} $\Sp$-algebras. 
\end{definition}

It follows from these definitions that quasi-isomorphic ($E_\infty$) DGAs are ($E_\infty$) topologically equivalent. However, there are examples of non-trivially topologically equivalent DGAs, i.e.\ DGAs that are topologically equivalent but not quasi-isomorphic \cite{dugger2007topologicalequiv}. 
Furthermore, examples of non-trivially $E_\infty$ topologically equivalent $E_\infty$ DGAs are constructed by the author in \cite{bayindir2018topeqeinfty}. 

\begin{example} \label{example of non extension}
This is an example of $E_\infty$ $\F_p$-DGAs that are not $\F_p$-extension. In Example 5.1 of \cite{bayindir2018topeqeinfty}, the author constructs  non-trivially $E_\infty$ topologically equivalent $E_\infty$ $\F_p$-DGAs that we call $X$ and $Y$, i.e.\ $X$ and $Y$ are $E_\infty$ topologically equivalent but they are not quasi-isomorphic. Although  these $E_\infty$ $\F_p$-DGAs are $E_\infty$ topologically equivalent, their Dyer-Lashof operations are different. 

For $p=2$, the homology rings of these $E_\infty$ $\F_p$-DGAs  are given by  
\[\F_2[x]/(x^4)\]
for both $X$ and $Y$ where $ \lvert x \rvert = 1$. On the homology of $X$, the first Dyer--Lashof operation is trivial, i.e.\ $\mathrm{Q}^1x=0$. On the other hand, we have  $\mathrm{Q}^1 x = x^3$ on the homology of $Y$. Using these properties we show (for all primes) that these $E_\infty$ $\F_p$-DGAs are not $\F_p$-extension $E_\infty$ $\fp$-DGAs. See Section \ref{proof of example of nonextension} for a proof of this fact. 
\end{example}

By Theorem 1.6 in \cite{bayindir2018topeqeinfty}, $E_\infty$ topological equivalences between $E_\infty$ $\F_p$-DGAs with trivial first homology preserve Dyer-Lashof operations. We prove a stronger result for  $\F_p$-extension  $E_\infty$ $\F_p$-DGAs.

\begin{theorem}\label{thm einfty te eq qiso}
Let $X$ be an $\F_p$-extension $E_\infty$ $\F_p$-DGA with $H_1X = 0$ and let $Y$ be an $E_\infty$ $\F_p$-DGA.  In this situation, $X$ and $Y$ are quasi-isomorphic if and only if they are $E_\infty$ topologically equivalent.
\end{theorem}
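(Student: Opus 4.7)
The forward direction is immediate: a quasi-isomorphism of $E_\infty$ $\F_p$-DGAs is sent by the Richter--Shipley Quillen equivalence to a weak equivalence of the corresponding commutative $H\F_p$-algebras, and forgetting the $H\F_p$-action yields an $E_\infty$ topological equivalence of $X$ and $Y$.

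For the converse, let $\widetilde X$ and $\widetilde Y$ denote the commutative $H\F_p$-algebras associated to $X$ and $Y$. The $\F_p$-extension hypothesis gives a weak equivalence $\widetilde X \simeq H\F_p \wedge E$ of commutative $H\F_p$-algebras for some cofibrant commutative $\Sp$-algebra $E$, and the topological equivalence supplies an equivalence of the underlying commutative $\Sp$-algebras of $\widetilde X$ and $\widetilde Y$. Composing gives a weak equivalence
\[f\co H\F_p \wedge E \we \widetilde Y\]
of commutative $\Sp$-algebras. Via Richter--Shipley, the theorem reduces to promoting $f$ to a weak equivalence of commutative $H\F_p$-algebras.

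For the promotion, I would analyze the fiber sequence of mapping spaces
\begin{align*}
\text{Map}_{\text{CAlg}(H\F_p)}(H\F_p \wedge E, \widetilde Y) &\to \text{Map}_{\text{CAlg}(\Sp)}(H\F_p \wedge E, \widetilde Y) \\
&\to \text{Map}_{\text{CAlg}(\Sp)}(H\F_p, \widetilde Y),
\end{align*}
whose fiber over the canonical unit $\eta_Y\co H\F_p \to \widetilde Y$ is identified with $\text{Map}_{\text{CAlg}(\Sp)}(E, \widetilde Y)$, since $H\F_p \wedge E$ is the coproduct of $H\F_p$ and $E$ in commutative $\Sp$-algebras. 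Lifting $f$ to an $H\F_p$-algebra equivalence reduces to showing that the restriction $f|_{H\F_p}$ and $\eta_Y$ lie in the same path-component of the base, i.e.\ that they are homotopic as commutative $\Sp$-algebra maps. Both of them induce the unique $\F_p$-algebra map on $\pi_0$, and the principal obstacle is to upgrade this degree-zero agreement to an $E_\infty$ ring homotopy. My plan is to invoke Theorem 1.6 of \cite{bayindir2018topeqeinfty}, whose hypotheses are met because $H_1 X = 0$ and topological equivalence preserves homology, to match the Dyer--Lashof structures induced on $H_*Y \cong H_*X$, and then to run a Postnikov-tower obstruction argument whose stagewise obstructions vanish thanks to the connectivity of $\widetilde Y$ and the triviality of $H_1 Y$. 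Once this connectedness of the base over $\eta_Y$ is secured, adjusting $f$ by an appropriate automorphism of $H\F_p \wedge E$ over $H\F_p$ produces the desired commutative $H\F_p$-algebra equivalence and hence the quasi-isomorphism between $X$ and $Y$.
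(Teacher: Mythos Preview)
Your setup is sound up to the point where you identify $H\F_p \wedge E$ as the coproduct of $H\F_p$ and $E$ in commutative $\Sp$-algebras; indeed the paper implicitly uses the same coproduct description. But the reduction you make afterward---showing that $f|_{H\F_p}$ and the structure map $\eta_Y$ are homotopic as $E_\infty$ ring maps $H\F_p \to \widetilde Y$---is where the argument breaks down. Theorem~1.6 of \cite{bayindir2018topeqeinfty} concerns Dyer--Lashof operations on $H_*X$ and $H_*Y$, not on the space of $E_\infty$ maps out of $H\F_p$, so it does not bear on whether two such maps are homotopic. And the ``Postnikov-tower obstruction argument whose stagewise obstructions vanish thanks to \ldots\ the triviality of $H_1Y$'' is asserted rather than carried out: the relevant obstructions to deforming one $E_\infty$ map into another live in topological Andr\'e--Quillen groups of $H\F_p$ with coefficients in $\pi_*\widetilde Y$, and there is no reason the single condition $\pi_1\widetilde Y=0$ should make all of these vanish. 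In short, you have replaced the problem by one that is at least as hard.

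The paper avoids this entirely. Rather than attempting to show that the given $\Sp$-algebra equivalence $\varphi$ respects the $H\F_p$-structure, it writes down a \emph{new} map
\[
\psi\colon H\F_p \wedge Z \;\xrightarrow{\,i\,}\; H\F_p \wedge H\F_p \wedge Z \;\xrightarrow{\,H\F_p \wedge \varphi\,}\; H\F_p \wedge Y \;\xrightarrow{\,m\,}\; Y,
\]
which is an $H\F_p$-algebra map by construction (the leftmost $H\F_p$ is untouched throughout), and then proves $\psi_*=\varphi_*$ directly. That computation passes through $\mathcal{A}_*\otimes \pi_*(H\F_p\wedge Z)$ and uses the crucial fact that the dual Steenrod algebra is generated under Dyer--Lashof operations by a single class in degree~$1$ ($\tau_0$ for odd $p$, $\xi_1$ for $p=2$). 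The hypothesis $H_1X=0$ forces $\pi_1Y=0$, so the image of that degree-$1$ generator in $\pi_*Y$ must vanish; the Dyer--Lashof action then kills every term of the form $a\otimes x$ with $\lvert a\rvert>0$, leaving only $\varphi_*$. This is the concrete mechanism by which $H_1=0$ enters, and it is exactly what your outline is missing.
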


In the following results, we show various situations where  topological equivalences and quasi-isomorphisms agree. 

\begin{theorem} \label{thm assoc te eq qiso}
Let ($p=2$) $p$ be an odd prime. Let $X$ be an  extension $\F_p$-DGA whose homology is trivial on degrees ($2^r-1$)  $2p^r-2$ for $r \geq 1$ and () $2p^s-1$ for $s \geq 0$ and let $Y$ be an $\F_p$-DGA. In this situation, $X$ and $Y$ are quasi-isomorphic if and only if they are topologically equivalent.
\end{theorem}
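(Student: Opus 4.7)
I would start by noting that the forward direction (quasi-isomorphic implies topologically equivalent) is immediate. For the converse, given a topological equivalence $\varphi\colon HX \simeq HY$ of $\Sp$-algebras and using that $X$ is $\F_p$-extension, I would write $HX \simeq H\F_p \wedge E$ as $H\F_p$-algebras for some cofibrant $\Sp$-algebra $E$. Since $H\F_p \wedge E$ is the coproduct of $H\F_p$ and $E$ in the category of $\Sp$-algebras, the map $\varphi$ is determined up to $\Sp$-algebra homotopy by a pair $(\varphi_1,\varphi_2)$ of $\Sp$-algebra maps $\varphi_1\colon H\F_p \to HY$ and $\varphi_2\colon E \to HY$. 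Let $\eta_Y\colon H\F_p \to HY$ denote the canonical unit coming from $Y$'s $\F_p$-DGA structure. The problem then reduces to establishing $\varphi_1 \simeq \eta_Y$ as $\Sp$-algebra maps: this allows one to homotope $\varphi$ to the $\Sp$-algebra map $(\eta_Y, \varphi_2)$, which is automatically $H\F_p$-linear and is therefore an $H\F_p$-algebra equivalence, yielding via Shipley's Quillen equivalence a quasi-isomorphism of $\F_p$-DGAs.

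To compare $\varphi_1$ and $\eta_Y$, I would analyze the space of $\Sp$-algebra maps $H\F_p \to HY$ by obstruction theory applied to a cofibrant resolution of $H\F_p$ in $\Sp$-algebras. The controlling structure is the dual Steenrod algebra $\pi_*(H\F_p \wedge H\F_p)$: for odd $p$ its generators are $\xi_i$ in degree $2p^i - 2$ (for $i \geq 1$) and $\tau_j$ in degree $2p^j - 1$ (for $j \geq 0$); for $p=2$ they are $\xi_i$ in degree $2^i - 1$ (for $i \geq 1$). Via an Adams-type (or topological Quillen cohomology) spectral sequence for this mapping space, the obstruction classes detecting the difference between $\varphi_1$ and $\eta_Y$ land in $\pi_n HY = H_n Y \cong H_n X$ for $n$ running over precisely these generator degrees. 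Both $\varphi_1$ and $\eta_Y$ already agree on $\pi_0$ (they factor the unit $\F_p \to H_0Y$ identically), so only the higher-degree obstructions are at issue.

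The vanishing hypothesis on $H_*X$ then kills every such obstruction, so $\varphi_1 \simeq \eta_Y$ as $\Sp$-algebra maps and $\varphi$ refines to the required $H\F_p$-algebra equivalence $HX \simeq HY$, giving the desired quasi-isomorphism of $\F_p$-DGAs.

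The hard part will be making the obstruction theory precise: exhibiting a cofibrant replacement of $H\F_p$ in $\Sp$-algebras whose free generators (or cells) live in exactly the degrees of the dual Steenrod algebra generators, and identifying the associated obstruction groups with the corresponding graded pieces of $H_*Y = H_*X$. Via base change along $H\F_p \to HY$, this should reduce to a computation involving the topological Hochschild (or topological Quillen) cohomology of $H\F_p$ with coefficients in $HY$, where the dual Steenrod algebra structure appears explicitly; a careful bookkeeping of the internal gradings is what pins the obstructions to the degrees listed in the theorem.
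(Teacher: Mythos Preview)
Your proposal has a genuine gap at its first structural step. You assert that $H\F_p \wedge E$ is the coproduct of $H\F_p$ and $E$ in the category of $\Sp$-algebras, so that an $\Sp$-algebra map out of it is determined by a pair $(\varphi_1,\varphi_2)$. This is true in \emph{commutative} $\Sp$-algebras, but Theorem~\ref{thm assoc te eq qiso} is about associative $\F_p$-DGAs and hence about associative $H\F_p$-algebras. In the associative setting the coproduct of $H\F_p$ and $E$ is a free product, not $H\F_p \wedge E$, so $\varphi$ cannot be decomposed as you suggest and the rest of your argument does not get off the ground. (The paper's Remark immediately after the proof of Theorem~\ref{thm einfty te eq qiso} notes precisely that the coproduct description is what makes the $E_\infty$ case cleaner; it is not available here.)

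The paper circumvents this by never trying to decompose $\varphi$ or to homotope it to an $H\F_p$-linear map. Instead it \emph{constructs} an $H\F_p$-algebra map $\psi\colon H\F_p \wedge Z \to Y$ directly, as the composite
\[
H\F_p \wedge Z \xrightarrow{\,i\,} H\F_p \wedge T \xrightarrow{H\F_p \wedge \varphi} H\F_p \wedge Y \xrightarrow{\,m\,} Y,
\]
where $T$ is a cofibrant replacement of $H\F_p\wedge Z$ in $\Sp$-algebras, $i$ is induced by a lift $Z\to T$, and $m$ is the $H\F_p$-action on $Y$. This $\psi$ is $H\F_p$-linear by construction; the work is to show it is a weak equivalence. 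One identifies $\pi_*(H\F_p\wedge T)\cong \mathcal{A}_*\otimes \pi_*(H\F_p\wedge Z)$ and checks $\psi_*=\varphi_*$ on homotopy: $i_*(x)=1\otimes x+\sum a_i\otimes x_i$ with $\lvert a_i\rvert>0$, while $m_*\circ\pi_*(H\F_p\wedge\varphi)$ sends $1\otimes x$ to $\varphi_*(x)$ and kills each $a_i\otimes x_i$ because the ring generators of $\mathcal{A}_*$ land in degrees where $\pi_*Y$ vanishes by hypothesis. No obstruction theory or cell structure on $H\F_p$ is needed; the vanishing hypothesis enters as a one-line degree argument rather than through a spectral sequence.
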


For the corollary below, note that  a co-connective DGA is a DGA with trivial homology in positive degrees.

\begin{corollary}
Let $X$ be a co-connective extension  $\F_p$-DGA and let $Y$ be an $\F_p$-DGA. Then $X$ and $Y$ are quasi-isomorphic if and only if they are topologically equivalent.
\end{corollary}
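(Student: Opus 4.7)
The plan is to deduce the corollary as an immediate application of Theorem \ref{thm assoc te eq qiso}. Since that theorem already handles the question of whether topological equivalence implies quasi-isomorphism for extension $\F_p$-DGAs $X$ whose homology vanishes in certain specified degrees, all that is needed is to check that a co-connective DGA automatically satisfies the vanishing hypothesis on homology degrees in the statement of Theorem \ref{thm assoc te eq qiso}. The other implication, that quasi-isomorphism implies topological equivalence, follows immediately from the definitions, as mentioned right after Definition \ref{def einfty topeq}.

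First I would handle the case $p = 2$. The degrees where Theorem \ref{thm assoc te eq qiso} requires vanishing of homology are $2^r - 1$ for $r \geq 1$, i.e.\ the set $\{1, 3, 7, 15, \ldots\}$. These are all strictly positive integers. Since $X$ is co-connective, $H_n X = 0$ for every $n > 0$, so in particular $H_{2^r-1} X = 0$ for every $r \geq 1$. Thus the hypothesis of Theorem \ref{thm assoc te eq qiso} holds.

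Next I would treat the case where $p$ is an odd prime. Here the required vanishing degrees are $2p^r - 2$ for $r \geq 1$ and $2p^s - 1$ for $s \geq 0$. For $r \geq 1$ we have $2p^r - 2 \geq 2p - 2 \geq 4 > 0$, and for $s \geq 0$ we have $2p^s - 1 \geq 1 > 0$. Again all of these degrees are strictly positive, so the co-connectivity of $X$ forces $H_n X = 0$ in each of these degrees, and the hypothesis of Theorem \ref{thm assoc te eq qiso} is verified.

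Having confirmed the hypothesis in both cases, I would then conclude by invoking Theorem \ref{thm assoc te eq qiso} directly: $X$ and $Y$ are quasi-isomorphic if and only if they are topologically equivalent. There is no real obstacle here; the content of the corollary is just the observation that ``co-connective'' is a single uniform condition subsuming the degree-wise vanishing hypotheses appearing in Theorem \ref{thm assoc te eq qiso} for every prime $p$.
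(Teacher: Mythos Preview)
Your proposal is correct and matches the paper's approach: the paper states the corollary immediately after Theorem \ref{thm assoc te eq qiso} without giving any separate proof, treating it as the evident specialization obtained by noting that all the required vanishing degrees are positive. Your explicit verification of this for both $p=2$ and odd $p$ is exactly what is implicitly intended.
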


For the theorem and the corollary below, let $R= \Z/(m)$ for some integer $m \neq \pm 1$.
\begin{theorem}\label{thm formal topological equivalences}
Let $X$ be an $R$-DGA whose corresponding $HR$-algebra is equivalent to $HR \wdg Z$ for some cofibrant $\sph$-algebra $Z$ whose underlying spectrum is equivalent to a coproduct of suspensions and/or desuspensions of the sphere spectrum. Also, let $Y$ be an $R$-DGA. Then $X$ and $Y$ are quasi-isomorphic if and only if they are topologically equivalent.
\end{theorem}

Our main interest for this theorem is due to its corollary stated below. This follows by Proposition \ref{prop formal hzalg} which implies that an $R$-DGA that satisfies the hypothesis of Theorem \ref{thm formal are extension} also satisfies the hypothesis of the theorem above.

\begin{corollary}
\label{prop formal topological equivalences}
Let $Y$ be an $R$-DGA and let $X$ be as in Theorem \ref{thm formal are extension}. Then $X$ and $Y$ are quasi-isomorphic if and only if they are topologically equivalent.
\end{corollary}

Two DGAs $X$ and $Y$ are said to be \textit{Morita equivalent} if the model categories of $X$-modules and $Y$-modules are Quillen equivalent. There is a stronger notion of Morita equivalence for DGAs called  \textit{dg Morita equivalences} defined by Keller, see \cite[Section 3.8]{keller2006ondifferential} and  \cite[7.6]{dugger2007topologicalequiv}. This is a strictly stronger notion of Morita equivalence since there are examples of DGAs that are Morita equivalent but not dg Morita equivalent \cite[Section 8]{dugger2007topologicalequiv}. However in the situations where topological equivalences and quasi-isomorphisms agree, these two notions of Morita equivalences also agree, see Proposition 7.7 and Theorem 7.2 of \cite{dugger2007topologicalequiv}. We obtain the following corollary to Theorems \ref{thm formal topological equivalences} and \ref{thm assoc te eq qiso}.
\begin{corollary}
Assume that $X$ and $Y$ are as in Theorem \ref{thm formal topological equivalences} or as in Theorem \ref{thm assoc te eq qiso}. Then $X$ and $Y$ are Morita equivalent if and only if they are dg Morita equivalent. 
\end{corollary}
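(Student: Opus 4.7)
The plan is to derive this corollary directly from Theorems \ref{thm formal topological equivalences} and \ref{thm assoc te eq qiso} combined with Proposition 7.7 and Theorem 7.2 of \cite{dugger2007topologicalequiv}, as previewed in the paragraph immediately preceding the statement. One direction is immediate from the definitions: dg Morita equivalence is strictly stronger than Morita equivalence, so any dg Morita equivalent pair is Morita equivalent.

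For the converse, assume $X$ and $Y$ are Morita equivalent. By Proposition 7.7 of \cite{dugger2007topologicalequiv}, Morita equivalence of DGAs transfers to a Morita equivalence of the associated $H\Z$-algebras $HX$ and $HY$ as ring spectra. By Theorem 7.2 of the same reference, this spectral Morita equivalence produces a comparison between $X$ and $Y$ witnessed by a zig-zag involving topologically equivalent DGAs; the discrepancy between spectral Morita equivalence and dg Morita equivalence is then precisely measured by whether these topological equivalences can be realised by quasi-isomorphisms. Now I would apply Theorem \ref{thm formal topological equivalences} or Theorem \ref{thm assoc te eq qiso}, whichever matches the hypothesis on $X$, to upgrade each such topological equivalence with one endpoint at $X$ to a quasi-isomorphism.

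The main (and essentially only) obstacle is to check that assuming the hypothesis on $X$ alone, as stated, is enough to feed the Dugger--Shipley machinery, which a priori might need the input at each intermediate DGA in the zig-zag. This reduces to observing that the properties of $X$ listed in Theorems \ref{thm formal topological equivalences} and \ref{thm assoc te eq qiso}, namely connective formality with a suitable homology basis, or being extension with the prescribed homological vanishing, are invariants of quasi-isomorphism. Consequently any DGA quasi-isomorphic to $X$ inherits the hypothesis and one may iterate along the zig-zag, so the entire chain can be replaced by quasi-isomorphisms and $X$ and $Y$ are dg Morita equivalent. This completes the plan.
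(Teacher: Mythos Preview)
Your proposal is correct and follows the same route as the paper, which gives no proof beyond the sentence preceding the corollary: one simply combines Theorems \ref{thm formal topological equivalences} and \ref{thm assoc te eq qiso} with Proposition 7.7 and Theorem 7.2 of \cite{dugger2007topologicalequiv}.

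One remark on your ``main obstacle'': the zig-zag concern is unnecessary. The Dugger--Shipley machinery does not produce a chain of topologically equivalent DGAs; rather, a Morita equivalence between $X$ and $Y$ yields a single compact generator in the derived category of $Y$-modules whose endomorphism DGA $E$ is \emph{topologically} equivalent to $X$, and dg Morita equivalence is the condition that $E$ be \emph{quasi-isomorphic} to $X$. Since Theorems \ref{thm formal topological equivalences} and \ref{thm assoc te eq qiso} impose hypotheses only on $X$ and allow the other DGA to be arbitrary, a single application to the pair $(X,E)$ suffices, with no need to propagate the hypothesis along intermediate DGAs. Your observation that the hypotheses on $X$ are quasi-isomorphism invariants is nonetheless correct and harmless.
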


\textbf{Organization} In Section \ref{sec dualsteenrodalgebra}, we describe the dual Steenrod algebra and the Dyer--Lashof operations on it. In Section \ref{sec proofofmany}, we prove Theorems \ref{thm einfty te eq qiso}, \ref{thm assoc te eq qiso}  and \ref{thm formal topological equivalences}.  Section \ref{sec thms nonextension} is devoted to the proof of Theorems \ref{thm 1} and \ref{thm nonext for p is 2}. In section \ref{sec formaldgas to hzalgebras}, we prove Theorem \ref{thm formal are extension}. This section is independent from Sections \ref{sec dualsteenrodalgebra}, \ref{sec proofofmany} and \ref{sec thms nonextension} and it contains  explicit descriptions of the $H\Z$-algebras corresponding to the formal DGAs as in Theorem \ref{thm formal are extension} which is of independent interest. We left the proof of Theorem \ref{thm formal are extension} to the end because it uses different tools than the rest of the proof in this work. Appendix A is devoted to a discussion on the compatibility of Definitions \ref{def extension} and \ref{def einfty extension}.

\textbf{Terminology} We work in the setting of symmetric spectra in simplicial sets \cite{hovey2000symmetricspectra}. For commutative ring spectra, we use the positive $\sph$-model structure developed in  \cite{Shipley2004convenient}. When we work in the setting of associative ring spectra, we use the stable model structure of \cite{hovey2000symmetricspectra}. Throughout this work, $R$ denotes a   general discrete commutative  ring except in Section \ref{sec proof of topological equivalence of formal dgas} where $R$ denotes a quotient of $\Z$. When we say ($E_\infty$) DGA, we mean ($E_\infty$) $\Z$-DGA. 

\textbf{Acknowledgements} The author would like to thank Don Stanley for suggesting
to study extension DGAs and also for showing  the construction of the monoid object in  Construction \ref{constr for graded monoids}. I also would like to thank Dimitar Kodjabachev and Tasos Moulinos for a careful reading of this work.

\section{The dual Steenrod algebra} \label{sec dualsteenrodalgebra}

Here, we recall the ring structure and the Dyer--Lashof operations on the dual Steenrod algebra. Using the standard notation, we denote the dual Steenrod algebra by $\mathcal{A}_*$. We have $\pi_*(H\F_p \wedge H\F_p) \cong \mathcal{A}_*$. Milnor shows that the dual Steenrod algebra is a free graded commutative $\fp$-algebra \cite{milnor1958steenrod}. 

For $p=2$, $\mathcal{A}_*$ is given by
\[ \mathcal{A}_* = \F_2[\xi_r \ \lvert \ 
 r\geq 1] = \F_2[\zeta_r \ \lvert \ r \geq 1] \]

\noindent where $\lvert \xi_r \rvert = \lvert \zeta_r \rvert =  2^r-1$. Let $\chi$ denote the action of the transpose map of the smash product on $\pi_*(H\F_p \wedge H\F_p$). We have $\chi(\xi_r)=\zeta_r$.

For an odd prime $p$, the following describes the dual Steenrod algebra. 
\[ \mathcal{A}_* = \F_p[\xi_r \ \lvert \ 
 r\geq 1] \otimes \Lambda(\tau_s \ \lvert \  s \geq 0) = \F_p[\zeta_r \ \lvert \ r \geq 1] \otimes \Lambda(\overline{\tau}_s \ \lvert \  s \geq 0) \]
\noindent Where $\lvert \xi_r \rvert = \lvert \zeta_r \rvert =  2(p^r-1)$ and $\lvert \tau_s \rvert = \lvert \overline{\tau}_s \rvert =  2p^s-1$. In this case, we have $ \chi(\xi_r) =\zeta_r$ and $\chi(\tau_r)=\overline{\tau}_r $.

 Dyer-Lashof operations are power operations that act on the homotopy ring of $H_\infty$ $H\F_p$-algebras \cite{brunerh}. By forgetting structure, commutative $H\F_p$-algebras are examples of $H_\infty$ $H\F_p$-algebras and therefore Dyer--Lashof operations are also defined on the homotopy ring of commutative $H\F_p$-algebras and maps of commutative $H\F_p$-algebras preserve these operations. 
 For $p=2$, there is a Dyer--Lashof operation denote by $\mathrm{Q}^s$ for ever integer $s$ where $\mathrm{Q}^s$ increases the degree by $s$. For odd $p$, there are Dyer--Lashof operations denoted by $\beta \mathrm{Q}^s$ and $\mathrm{Q}^s$ for every integer $s$ that increase the degree by $2s(p-1)-1$ and $2s(p-1)$ respectively. See \cite[\rom{3}.1.1]{brunerh} for further properties of these operations.

 With the unit map 
\[\hfp \cong \hfp \wdg \sph \to \hfp \wdg \hfp,\] 
$H\F_p \wedge H\F_p$ is a commutative $H\F_p$-algebra and therefore Dyer--Lashof operations are defined on the dual Steenrod algebra. These operations are first studied in \cite[\rom{3}.2]{brunerh}. Steinberger shows that the degree one element $\tau_0$ for odd $p$ and $\xi_1$ for $p=2$ generates the dual Steenrod algebra as an algebra with Dyer--Lashof operations, i.e.\ as an algebra over the Dyer--Lashof algebra. In particular for $p=2$, we have 
 \[ \mathrm{Q}^{2^s-2} \xi_1 = \zeta_s \ \ \text{for} \ s > 1. \] 
 For odd $p$, we have 
\begin{equation*} 
\begin{split}
\mathrm{Q}^{(p^s-1)/(p-1)} \tau_0 &= (-1)^s \overline{\tau}_s \\
\beta \mathrm{Q}^{(p^s-1)/(p-1)} \tau_0 &= (-1)^s \zeta_s. 
\end{split}
\end{equation*}
 for $s \geq 1$.
 
\section{Proof of the results on topological equivalences and the non-extension example} \label{sec proofofmany}

In this section, we prove Theorems  \ref{thm einfty te eq qiso}, \ref{thm assoc te eq qiso}  and \ref{thm formal topological equivalences} which provide comparison results on ($E_\infty$) topological equivalences and quasi-isomorphisms of ($E_\infty$) DGAs for various cases. At the end, we prove Proposition \ref{prop for the  example} which justifies the last claim in Example \ref{example of non extension}. This provides examples of $E_\infty$ $\F_p$-DGAs that are not $\F_p$-extension. 

These results are obtained using  similar arguments. Therefore, we suggest the reader to go through their proof in the order presented in this section. 

\subsection{Proof of Theorem \ref{thm einfty te eq qiso} and Theorem \ref{thm assoc te eq qiso}}
The proof of Theorem \ref{thm einfty te eq qiso} and Theorem \ref{thm assoc te eq qiso} are similar. Therefore, we combine them in a single proof. 

In the proof of Theorems \ref{thm einfty te eq qiso} and \ref{thm assoc te eq qiso} and also in the proof of Theorem \ref{thm formal topological equivalences} and Proposition \ref{prop for the  example}, we show that for various $R$-extension ($E_\infty$) $R$-DGAs, ($E_\infty$) topological equivalences and    quasi-isomorphisms agree. 

For this, we use the same technique to produce a quasi-isomorphism, i.e.\ an $HR$-algebra equivalence, out of a given topological equivalence, i.e.\ an $\sph$-algebra equivalence. We start by describing this technique.

Let us focus on the $E_\infty$ case. Assume that we are given commutative $HR$-algebras $Y$ and $HR \wdg Z$ where $Z$ denotes a cofibrant commutative $\sph$-algebra and assume that we are given a weak equivalence 
\[\varphi \co HR \wdg Z \we Y\]
of commutative $\sph$-algebras. Using $\varphi$, we produce a map of commutative $HR$-algebras  through the following composite.  
\begin{equation*} 
 \begin{tikzcd}
 \psi \co HR \wedge Z  \cong HR \wedge \Sp \wedge Z \arrow[r,"i"] &[-8pt] HR \wedge HR \wedge Z  \arrow[r,swap,"\simeq"] \arrow[r,"HR \wedge \varphi"]
 &[4pt]
 HR \wedge Y \ar[r,"m"]  
 &  Y
 \end{tikzcd}
 \end{equation*}
Here, $i$ is the canonical map induced by the unit map $\sph \to HR$ of $HR$ and $m$ is the commutative $HR$-algebra structure map of $Y$. Except $Y$, we provide the objects in the composite above with the commutative $HR$-algebra structure coming from the first $HR$ factor. The maps $i$ and $HR \wdg \varphi$ are maps of commutative $HR$-algebras as they are obtained using the functor $HR \wdg - $ from the category of commutative $\sph$-algebras to the category of commutative $HR$-algebras. Note that  $m$ is the left adjoint of the identity map of $Y$ under the adjunction between the categories of commutative $\sph$-algebras and  commutative $HR$-algebras whose left adjoint is given by the extension of scalars functor $HR \wdg - $ and whose right adjoint is given by the restriction of scalars functor. In particular, this shows that $m$ is also a map of commutative $HR$-algebras. We deduce that $\psi$ is a map of commutative $HR$-algebras as it is given by a composite of such maps. Compared to the commutative case, the definition of the map $\psi$ is slightly more complicated in the associative case as we consider various cofibrant replacements. The results we prove in this section are obtained by showing that $\psi$ is an equivalence under the given hypothesis.

In the proof below, we denote the category of commutative $E$-algebras by $E\calg$ and the category of associative $E$-algebras by $E \alg$ for a given commutative ring spectrum $E$.
\begin{proof}[Proof of Theorem \ref{thm einfty te eq qiso} and Theorem \ref{thm assoc te eq qiso}]
First, we prove Theorem  \ref{thm einfty te eq qiso}. After that, we show how this proof should be modified to obtain Theorem \ref{thm assoc te eq qiso}.

Since quasi-isomorphic $E_\infty$ DGAs are always $E_\infty$ topologically equivalent, we only need to show that if  $X$ and $Y$ are $E_\infty$ topologically equivalent then they are quasi-isomorphic as $E_\infty$ $\F_p$-DGAs.

Let $H\F_p$ denote a cofibrant model of $H\F_p$ in $\Sp \calg$. The category of commutative $\hfp$-algbera spectra is the same as the category of commutative $\sph$-algebra spectra under $\hfp$. Therefore we have a model structure on $H\F_p \calg$ where the cofibrations, fibrations and weak equivalences are precisely the maps that forget to cofibrations, fibrations and weak equivalences in $\Sp \calg$. 
We let $Y$ also denote the commutative $H\F_p$-algebra corresponding to the $E_\infty$ DGA $Y$. Therefore $\pi_1(Y) = 0$. Taking a fibrant replacement, we assume $Y$ is fibrant both in $H\F_p \calg$ and in $\Sp \calg$. Furthermore,  we let $H\F_p \wedge Z$ denote the commutative $H\F_p$-algebra corresponding to the  extension $E_\infty$ $\F_p$-DGA $X$ where $Z$ is a cofibrant object in $\Sp \calg$. This ensures that $H\F_p \wedge Z$ is cofibrant in $H\F_p \calg$. Therefore the composite $\Sp \cof H\F_p \cof H\F_p \wedge Z$ is also a cofibration in $\Sp \calg$; this shows that $H\F_p \wedge Z$ is also cofibrant in $\Sp \calg$. To prove Theorem \ref{thm einfty te eq qiso}, we need to show that $H\F_p \wedge Z$ and $Y$ are weakly equivalent in $H\F_p\calg$.


Because $H\F_p \wedge Z$ and $Y$ are obtained from $E_\infty$ topologically equivalent $E_\infty$ DGAs, they are equivalent as commutative $\Sp$-algebras. Furthermore $H\F_p \wedge Z$ is cofibrant and $Y$ is fibrant, therefore there is a weak equivalence $\varphi \co H\F_p \wedge Z \we Y$
of commutative $\Sp$-algebras. We consider the composite map 
\begin{equation} \label{diag we in HFp calg}
 \begin{tikzcd}
 \psi \co H\F_p \wedge Z  \cong H\F_p \wedge \Sp \wedge Z \arrow[r,"i"] &[-8pt] H\F_p \wedge H\F_p \wedge Z  \arrow[r,swap,"\simeq"] \arrow[r,"H\F_p \wedge \varphi"]
 &[4pt]
 H\F_p \wedge Y \ar[r,"m"]  
 &  Y
 \end{tikzcd}
 \end{equation}
 where  the first map is induced by the unit map $u_{\hfp}\co \Sp \to H\F_p$ of $H\F_p$ and the last map is the $H\F_p$ structure map of $Y$. If we consider all the objects in this composite except $Y$ to have the $H\F_p$ structure coming from the first smash factor, then all objects involved are commutative $H\F_p$-algebras and the maps involved are maps of commutative $H\F_p$-algebras. Note that $i$ and $\hfp \wdg \varphi$ are maps of commutative $\hfp$-algebras as they are obtained via the functor $\hfp \wdg -  \co \sph \calg \to \hfp \calg$. The last map $m$ is a map of commutative $\hfp$-algebras because it is the left adjoint of the identity map of $Y$ under the usual adjunction between $\sph\calg $ and $\hfp \calg$. Since all the maps in the composite above are maps of commutative $\hfp$-algebras, we deduce that $\psi$ is a map of commutative $\hfp$-algebras.
 
 What remains is to show that $\psi$ is a weak equivalence. For this, we take the homotopy groups of the composite defining $\psi$ and show that it is an isomorphism. Firstly, we have a splitting \[H\F_p \wedge H\F_p \wedge Z \cong (H\F_p \wedge H\F_p )\wedge_{H\F_p} (H\F_p \wedge Z)\] in $H\F_p \calg$ where we consider the object on the right hand side of the equality with the $H\F_p$ structure given by the first smash factor instead of the canonical one given by the smash product $\wedge_{H\F_p}$. Because the  homotopy of $H\F_p$ is a field, we have $\pi_*(H\F_p \wedge H\F_p \wedge Z) \cong \mathcal{A}_* \otimes \pi_*(H\F_p \wedge Z)$, see  \cite[\rom{4}.4.1]{ekmm}. With this identification, we obtain that the composite map induced in homotopy by the composite defining $\psi$ is given by 
 \begin{equation} \label{psi star diagram}
 \begin{tikzcd}
 \psi_* \co \pi_*(H\F_p \wedge Z)   \arrow[r,"i_*"] &  \mathcal{A}_* \otimes \pi_*(H\F_p \wedge Z) \arrow[rr,swap,"\cong"] \arrow[rr,"\pi_*(H\F_p \wedge \varphi)"]
 &&
 {H\F_p}_* Y \arrow[r,"m_*"]  
 &  Y_*.
 \end{tikzcd}
 \end{equation}

Note that although we identify the domain of  $\pi_*(\hfp \wdg \varphi)$ as a tensor product, we do not claim that $\pi_*(\hfp \wdg \varphi)$ splits as a tensor product of two maps. 

Now we state and prove the following claims. Afterwards, we combine them to prove that $\psi_*$ is an isomorphism by showing $\psi_*= \varphi_*$.
 
 \underline{Claim 1:} The composite $m_* \circ \pi_*(H\F_p \wedge \varphi)$ maps every element of the form $a\otimes x$ with $\lvert a \rvert > 0$ to zero in $Y_*$.
 
 We have a canonical  map \[(H\F_p \wedge H\F_p) \wedge_{H\F_p} H\F_p \to (HF_p \wedge H\F_p )\wedge_{H\F_p} (H\F_p \wedge Z).\] This map is in $H\F_p \calg$ therefore the induced map in homotopy preserves the Dyer--Lashof operations. The induced map in homotopy is given by the inclusion $\mathcal{A}_* \otimes \F_p \to \mathcal{A}_* \otimes \pi_*(H\F_p \wedge Z)$ and this shows that Dyer-Lashof operations on this subset of  $\mathcal{A}_* \otimes \pi_*(H\F_p \wedge Z)$ are given by the action of the Dyer--Lashof operations on the dual Steenrod algebra i.e.\ $\mathrm{Q}^s (a \otimes 1)= (\mathrm{Q}^s a) \otimes 1$. Let $p$ be an odd prime. Since $\pi_1 (Y)$ is trivial, $m_* \circ \pi_*(H\F_p \wedge \varphi)(\tau_0 \otimes 1) = 0$. Because the dual Steenrod algebra is generated with the Dyer--Lashof operations by $\tau_0$, this shows that $m_* \circ \pi_*(H\F_p \wedge \varphi) (a \otimes 1)= 0 $ for all $a \in \mathcal{A}_*$ with $\lvert a \rvert >0$. Since all maps involved are ring maps and $a \otimes x = (a\otimes 1)(1 \otimes x)$, this finishes the proof of our claim. Note that for $p=2$, one uses $\xi_1$ instead of $\tau_0$. 
 
 \underline{Claim 2:} We have $m_* \circ \pi_*(H\F_p \wedge \varphi)(1 \otimes x) = \varphi_*(x)$ for every $x \in \pi_*(H\F_p \wedge Z)$. 
 We consider the following commutative diagram.
 \begin{equation} \label{diag claim 2}
 \begin{tikzcd}
  (\Sp \wedge H\F_p) \wedge_{H\F_p} (H\F_p \wedge Z) \arrow[r,"\cong"] \arrow[d,"h"] & \Sp \wedge H\F_p \wedge Z \arrow[r,"\Sp \wedge \varphi"] \ar[d] & \Sp \wedge Y \ar[d,"h_Y"] \\
 (H\F_p \wedge H\F_p)\wedge_{H\F_p} (H\F_p \wedge Z) \ar[r,"\cong"] & H\F_p \wedge H\F_p \wedge Z \ar[r,"H\F_p \wedge \varphi"]   & H\F_p \wedge Y  \arrow[r,"m"] & Y
 \end{tikzcd}
 \end{equation}
 Because $Y$ is in $H\F_p \calg$, $m \circ h_Y = id$. We have $m_* \circ \pi_*(H\F_p \wedge \varphi)(1 \otimes x) = m_* \circ \pi_*(H\F_p \wedge \varphi) \circ h_*(x)$. Carrying $x$ through the top row and then composing with $m \circ h_Y$, we obtain the equality in our claim.
 
 \underline{Claim 3:} We have $i_* (x) = 1 \otimes x + \Sigma_{i} a_i \otimes x_i$ for some $a_i \in \mathcal{A}_*$ with $\lvert a_i\rvert >0$ and $x_i \in \pi_*(H\F_p \wedge Z)$.
 
The composite of the maps below is the identity
\[H\F_p \wedge Z \cong H\F_p \wedge \Sp \wedge Z \xrightarrow{i} H\F_p \wedge H\F_p \wedge Z \xrightarrow{m_{\hfp} \wdg id} H\F_p \wedge Z \]
where $m_{\hfp}$ is the multiplication map of $H\F_p$. With the identification 
\[H\F_p \wedge H\F_p \wedge Z \cong (H\F_p \wedge H\F_p )\wedge_{H\F_p} (H\F_p \wedge Z),\]
we obtain the following composite in homotopy 
\begin{equation} \label{diag retract}
\pi_*(H\F_p \wedge Z) \xrightarrow{i_*} \mathcal{A}_* \otimes \pi_*(H\F_p \wedge Z) \xrightarrow{\pi_*(m_{\hfp} \wdg id)} \pi_*(H\F_p \wedge Z)
\end{equation}
where $\pi_*(m_{\hfp} \wdg id)$ is given by the augmentation $\mathcal{A}_* \to \F_p$. This description of $\pi_*(m_{\hfp} \wdg id)$ and the fact that $\pi_*(m_{\hfp} \wdg id) \circ i_*= id$ proves our claim.

Finally, we have 
\begin{equation*}
    \begin{split}
        \psi_*(x) & = m_* \circ \pi_*(H\F_p \wedge \varphi) \circ i_* (x) \\
                  & = m_* \circ \pi_*(H\F_p \wedge \varphi) (1 \otimes x + \Sigma_{i} a_i \otimes x_i) \\
                  & = \varphi_*(x)
    \end{split}
\end{equation*}
 for some $a_i \in \mathcal{A}_*$ with $\lvert a_i \rvert >0$. Here, the first equality follows by the definition of $\psi_*$, the second equality follows by Claim 3 and the third follows by Claim 2 and Claim 1. This proves that $\psi_*$ is an isomorphism and therefore $\psi$ is a weak equivalence. At this point, we are done with the proof of Theorem \ref{thm einfty te eq qiso}.

 \bigskip
 
Note that for Theorem \ref{thm assoc te eq qiso}, we work in the setting of associative algebras. In this case, we need to be more careful with cofibrant replacements since the forgetful functor from $H\F_p \alg$ to $\Sp \alg$ does not necessarily preserve cofibrant objects. Let $H\F_p$ be as before and let $Z$ be cofibrant in $\Sp \alg$ such that $H\F_p \wedge Z$ is an $H\F_p$-algebra that corresponds to $X$. By abuse of notation, let $Y$ be a fibrant $H\F_p$-algebra corresponding to $Y$. Let $T \trfib H\F_p \wedge Z$ be a cofibrant replacement of $H\F_p \wedge Z$ in $\Sp \alg$.  We have the following lift
\begin{equation} \label{diag lifting to the replacement}
    \begin{tikzcd}
     \Sp \arrow[r] \arrow[d,rightarrowtail] & T \arrow[d,twoheadrightarrow,"\sim"]
    \\
     Z \ar[r] \arrow[ur,dashed,"f"]&  H\F_p \wedge Z
    \end{tikzcd}
\end{equation} 
in $\Sp \alg$ where the bottom map is given by the map $Z \cong \Sp \wedge Z \to H\F_p \wedge Z$. Since $T$ and $Y$ are obtained from topologically equivalent DGAs, they are equivalent in $\Sp \alg$. Also because $T$ is  cofibrant and $Y$ is fibrant, we have a weak equivalence $\varphi \co T \we Y$ of $\Sp$-algebras. We obtain the following composite map of $H\F_p$-algebras 
\begin{equation*}
 \begin{tikzcd}
 \psi \co H\F_p \wedge Z   \arrow[r,"i"] &[-8pt] H\F_p \wedge T \arrow[r,swap,"\simeq"] \arrow[r,"H\F_p \wedge \varphi"]
 &[4pt]
 H\F_p \wedge Y \ar[r,"m"]  
 &  Y
 \end{tikzcd}
 \end{equation*}
where $i= H\F_p \wedge f$ and $m$ is the $H\F_p$ structure map of $Y$. The map $m$  is a map of $\hfp$-algebras because it is the left adjoint of the identity map of $Y$ under the usual adjunction between $\hfp \alg$ and $\sph \alg$. Note that we denote $H\F_p \wedge f$ by $i$ because the map $i$ in the composite above should be compared to the map $i$ in \eqref{diag we in HFp calg}.

Again, what remains is to show that $\psi_*$ is an isomorphism. Note that the functor $H\F_p \wedge -$ preserves weak equivalences as described in the proof of Proposition 4.7 in \cite{Shipley2004convenient}. Identifying homotopy groups of $T$ with homotopy groups of $H\F_p \wedge Z$ through the trivial fibration above and similarly identifying the homotopy groups of $H\F_p \wedge T$ with those of $H\F_p \wedge H\F_p \wedge Z$, we obtain a description of $\psi_*$ similar to the one in \eqref{psi star diagram}. 
 \begin{equation*} 
 \begin{tikzcd}
 \psi_* \co \pi_*(H\F_p \wedge Z)   \arrow[r,"i_*"] &  \mathcal{A}_* \otimes \pi_*(H\F_p \wedge Z) \arrow[rr,swap,"\cong"] \arrow[rr,"\pi_*(H\F_p \wedge \varphi)"]
 &&
 {H\F_p}_* Y \arrow[r,"m_*"]  
 &  Y_*
 \end{tikzcd}
 \end{equation*}
It is sufficient to show that the claims above also  hold in this case. Claim 1 follows by the hypothesis that $\pi_* Y$ is trivial at the degrees where the algebra generators of the dual Steenrod algebra are. Claim 2 follows similarly. For Claim 3, consider the following sequence of maps
\[H\F_p \wedge Z \xrightarrow{i} H\F_p \wedge T \we H\F_p \wedge H\F_p \wedge Z \xrightarrow{m_{H\fp} \wdg id} H\F_p \wedge Z\]
where $m_{H\fp}$ is the multiplication map of $H\fp$. Due to Diagram \eqref{diag lifting to the replacement}, the composite above is the identity map. Taking homotopy groups of the composite above and omitting the equivalence in the middle, one obtains \eqref{diag retract}. The rest of the proof of Claim 3 follows as before.
\end{proof}

\begin{remark}
The proof of Theorem \ref{thm einfty te eq qiso} is showing slightly more. For a given cofibrant $Z$ in $\Sp \calg$ and  a fibrant $Y$ in $H\F_p \calg$ with $\pi_1Y =0$ and an equivalence $H\F_p \wedge Z \we Y$ of $\Sp$-algebras, the map $H\F_p \wedge Z \to Y$ in $H\F_p \calg$ given by the structure map of $Y$ on $H\F_p$ and the map $\Sp \wedge Z \to H\F_p \wedge Z \we Y$ on $Z$ is also a weak equivalence. Note that to construct this map, we use the fact that $H\F_p \wedge Z$ is a coproduct of $H\F_p$ and $Z$ in $\Sp \calg$.
\end{remark}

\subsection{Example \ref{example of non extension}}\label{proof of example of nonextension} Here, we show that the $E_\infty$ $\F_p$-DGAs provided in Example \ref{example of non extension} are not $\F_p$-extension. 
\begin{proposition}\label{prop for the  example}
Let $X$ and $Y$ be as in Example \ref{example of non extension}. As $E_\infty$ $\fp$-DGAs, $X$ and $Y$ are not $\fp$-extension. 
\end{proposition}

\begin{proof}   Recall that in Example \ref{example of non extension}, we provide examples of $E_\infty$ $\fp$-DGAs that are $E_\infty$ topologically equivalent but not quasi-isomorphic. We prove that $X$ is not an extension $E_\infty$ $\F_p$-DGA. In order to show $Y$ is not extension, it suffices to exchange the roles of $X$ and $Y$ in the proof below.

We assume that $X$ is an extension $E_\infty$ $\F_p$-DGA and obtain a contradiction by showing that $X$ and $Y$ are quasi-isomorphic under this assumption. This is similar to the proof of Theorem \ref{thm einfty te eq qiso} that we assume familiarity with.
 Following the constructions there, we obtain a map of commutative $H\F_p$-algebras 
  \begin{equation*} 
 \begin{tikzcd}
 \psi \co H\F_p \wedge Z  \cong H\F_p \wedge \Sp \wedge Z \arrow[r,"i"] &[-8pt] H\F_p \wedge H\F_p \wedge Z  \arrow[r,swap,"\simeq"] \arrow[r,"H\F_p \wedge \varphi"]
 &[4pt]
 H\F_p \wedge Y \ar[r,"m"]  
 &  Y
 \end{tikzcd}
 \end{equation*}
 as in Diagram \eqref{diag we in HFp calg}
 where $H\F_p \wedge Z$ denotes a commutative $H\F_p$-algebra corresponding to $X$ and  $Y$ denotes a commutative $H\F_p$-algebra corresponding to the $E_\infty$ $\F_p$-DGA $Y$ by abusing notation. This is a map of commutative $H\F_p$-algebras as before. Therefore, it is sufficient to show that $\psi_*$ is an isomorphism.
 
 As in \eqref{psi star diagram}, we have the following description of $\psi_*$.
  \begin{equation*}
 \begin{tikzcd}
 \psi_* \co \pi_*(H\F_p \wedge Z)   \arrow[r,"i_*"] &  \mathcal{A}_* \otimes \pi_*(H\F_p \wedge Z) \arrow[rr,swap,"\cong"] \arrow[rr,"\pi_*(H\F_p \wedge \varphi)"]
 &&
 {H\F_p}_* Y \arrow[r,"m_*"]  
 &  Y_*
 \end{tikzcd}
 \end{equation*}
  By Claim 3 in the proof of Theorem \ref{thm assoc te eq qiso}, for every $x \in \pi_*(H\F_p \wedge Z)$ we have 
 \begin{equation} \label{eq claim 3 repeated}
 i_* (x) = 1 \otimes x + \Sigma_{i} a_i \otimes x_i
 \end{equation}
for some $a_i \in \mathcal{A}_*$ with $\lvert a_i\rvert >0$ and $x_i \in \pi_*(H\F_p \wedge Z)$. 
 
 For $p=2$, $\pi_*(H\F_p \wedge Z)\cong \F_2[x]/(x^4)$ with $\lvert x \rvert =1$. By degree reasons, we either have  $i_*(x) = 1 \otimes x$ or $i_*(x) = 1 \otimes x + \xi_1 \otimes 1$. Since $(1 \otimes x + \xi_1 \otimes 1)^4 \neq 0$ but $x^4 = 0$, the second option is not possible. Therefore we have $i_*(x) = 1 \otimes x$. Since $i$ is a map of ring spectra, $i_*$ is multiplicative so we have $i_*(x^l) = 1 \otimes x^l$ for every $l$.   By Claim 2 in the proof of Theorem \ref{thm einfty te eq qiso}, this shows that $\psi_*$ is an isomorphism. This provides a contradiction as $X$ and $Y$ are not quasi-isomorphic as $E_\infty$ $\F_2$-DGAs.  
 
 For odd $p$, we have \[\pi_*Y \cong \pi_*(H\F_p \wedge Z)\cong \Lambda_{\F_p}[x,y]\] with $\lvert x \rvert =1$, $\lvert y \rvert = 2p-2$. By \eqref{eq claim 3 repeated} above, we have either $i_*(y) = 1 \otimes y$ or $i_*(y) = c \xi_1 \otimes 1 + 1 \otimes y$ for some unit $c \in \F_p$. However, $y^2 = 0$ but $(c\xi_1 \otimes 1 + 1 \otimes y)^2 \neq 0$ so only the first option is possible. This shows that $\psi_*(y)=y$ due to Claim 2 in the proof of Theorem \ref{thm einfty te eq qiso}. The $2p-2$ Postnikov sections of $Y$ and $H\F_p \wedge Z$ agrees with that of $H\F_p \wedge H\F_p$ in commutative $H\F_p$-algebras, see Example 5.1 in \cite{bayindir2018topeqeinfty}. Using this together with the fact that $\beta \mathrm{Q}^1\tau_0 = -\zeta_1$ in the dual Steenrod algebra, we obtain that we have $\beta \mathrm{Q}^1 x = y$ up to a unit both in $\pi_* (H\F_p \wedge Z)$ and in $\pi_*Y$. Because $\psi$ is a map of commutative $H\F_p$-algebras, $\psi_*$ preserves Dyer--Lashof operations. Since $\psi_*(y) =y$, we obtain that $\psi_*(x)=x$ up to a unit of $\F_p$. Because $\psi_*$ is a ring map, we deduce that $\psi_*$ is indeed an isomorphism. Therefore $\psi$ is a weak equivalence of commutative $H\F_p$-algebras between the commutative $H\F_p$-algebras corresponding to the $E_\infty$ $\F_p$-DGAs $X$ and $Y$. This contradicts the fact that $X$ and $Y$ are not quasi-isomorphic as $E_\infty$ $\F_p$-DGAs and finishes our proof.
\end{proof}
\subsection{Proof of Theorem \ref{thm formal topological equivalences}} \label{sec proof of topological equivalence of formal dgas}

In the proof below, $R$ denotes $\Z/(m)$ for some $m \neq \pm 1$. Furthermore, $X$ denotes an $R$-DGA whose corresponding $HR$-algebra is equivalent to $HR \wdg Z$ where $Z$ is a cofibrant $\sph$-algebra whose underlying spectrum is weakly equivalent to a coproduct of suspensions and/or desuspensions of the sphere spectrum. For every $R$-DGA $Y$, we need to show that $X$ and $Y$ are quasi-isomorphic if and only if they are topologically equivalent.
\begin{proof}[Proof of Theorem \ref{thm formal topological equivalences}]
Let $HR$ be cofibrant as a commutative $\sph$-algebra. This guarantees that $HR\wdg -$ preserves weak equivalences, see \cite[proof of Proposition 4.7]{Shipley2004convenient}. Since $HR\wdg -$ preserves weak equivalences, we can further assume $Z$ to be fibrant. 

Let $Y$ be an $R$-DGA. Since quasi-isomorphic $R$-DGAs are always topologically equivalent, we only need to show that $X$ and $Y$ are quasi-isomorphic if they are topologically equivalent. Abusing notation, we also let $Y$ denote a fibrant $HR$-algebra corresponding to the $R$-DGA $Y$. We assume that $X$ and $Y$ are topologically equivalent, i.e.\ $HR \wdg Z$ and $Y$ are equivalent as $\sph$-algebras. Using this, we are going to show that there is a weak equivalence 
\[\psi \co HR \wdg Z \we Y\]
of $HR$-algebras.

Let $g \co T \trfib HR \wdg Z$ be a cofibrant replacement of $HR \wdg Z$ in $\sph$-algebras. As in Diagram \eqref{diag lifting to the replacement}, there exists a map $f \co Z \to T$ such that the following diagram commutes. 
\begin{equation}\label{diag anotherlift}
    \begin{tikzcd}
       \sph \ar[d,rightarrowtail ] \ar[r] & T \ar[d,twoheadrightarrow,"g"]\ar[d,"\simeq",swap]\\
       Z \ar[r,swap,"h_Z"] \ar[ur,dashed,"f"]& HR \wdg Z
    \end{tikzcd}
\end{equation}
Here, $h_Z$ denotes the canonical map 
\[h_Z\co Z \cong \sph \wdg Z \to HR \wdg Z.\]

Since $X$ and $Y$ are topologically equivalent, $T$ and $Y$ are equivalent as $\sph$-algebras. Furthermore, $T$ is cofibrant and $Y$ is fibrant, therefore we have a weak equivalence 
\[\varphi \co T \we Y\]
of $\sph$-algebras.

We obtain the composite map 
\[\psi \co HR \wdg Z \xrightarrow{HR \wdg f} HR \wdg T \xrightarrow{ HR \wdg \varphi} HR \wdg Y \xrightarrow{m} Y\]
of $HR$-algebras where $m$ denotes the $HR$ structure map of $Y$. Note that the last map above is a map of $HR$-algebras as it is the left adjoint of the identity map of $Y$ under the usual adjunction between the categories of $HR$-algebras and $\sph$-algebras.  Since $\psi$ is a map of $HR$-algebras, it is sufficient to show that $\psi$ induces an isomorphism in homotopy. 

We have the following commuting diagram
\begin{equation*}
 \begin{tikzcd}
  \Sp \wedge T \arrow[r,"\Sp \wedge \varphi"] \ar[d,"h_T"] & \Sp \wedge Y \ar[d]\ar[dr,"id"] \\
  HR \wdg T \ar[r,"HR \wedge \varphi"]   & HR \wedge Y  \arrow[r,"m"]  & Y
 \end{tikzcd}
 \end{equation*}
where the vertical maps are the canonical maps induced by the unit map $u_R\co \sph \to HR$. This shows that the composite map starting from $T\cong \sph \wdg T$ and ending in $Y$ is given by $\varphi$ and therefore is a weak equivalence. In particular, $\pi_*(m\circ( HR\wdg \varphi))$ is an isomorphism when it is restricted to the image of the Hurewicz map of $T$
\[\pis h_T \co \pi_*(\sph \wdg T) \to \pi_*(HR \wdg T).\]
Therefore in order to prove that $\psi_*$ is an isomorphism, it is sufficient to show that the map 
\[\pi_*(HR \wdg f)\co \pi_*(HR \wdg Z) \to \pi_*(HR \wdg T)\]
is injective and its image agrees with the image of $\pis h_T$. For this, it is sufficient to  prove that the corresponding statements are true after composing with the isomorphism 
\[\pi_*(HR \wdg g)\co \pi_*(HR \wdg T) \xrightarrow{\cong} \pi_*(HR \wdg HR \wdg Z).\]
In other words, it is sufficient to show that 
\[\pi_*(HR \wdg g) \circ \pi_*(HR \wdg f)\]
is injective and the image of this map agrees with the image of the map $\pi_*(HR \wdg g) \circ \pis h_T$. Due to Diagram \eqref{diag anotherlift}, $g \circ f = h_Z$. Therefore, it is sufficient to show that $\pis(HR \wdg h_Z)$ is injective in homotopy and its image agrees with the image of $\pi_*(HR \wdg g) \circ \pis h_T$.

The following composite is the identity map: 
\[HR \wdg Z \xrightarrow{HR \wdg h_Z} HR \wdg HR \wdg Z \xrightarrow{m \wdg id } HR \wdg Z,\]
where $m$ denotes the multiplication map of $HR$ and $id$ denotes the identity map of $Z$. From this, we deduce that $\pis(HR \wdg h_Z)$ is injective in homotopy as desired. What remains to prove is that the image of $\pis(HR \wdg h_Z)$ agrees with the image of $\pi_*(HR \wdg g) \circ \pis h_T$.

Due to the commuting diagram:
\begin{equation*} 
 \begin{tikzcd}
  \Sp \wedge T \ar[r,"g"]\arrow[r,"\simeq",swap] \ar[d,"h_T"] & \Sp \wedge HR \wdg Z \ar[d,"h_{HR \wdg Z}"]\\
  HR \wdg T  \ar[r,swap,"\simeq"]\ar[r,"HR \wedge g"]   & HR \wedge HR \wdg Z,  
 \end{tikzcd}
 \end{equation*}
 the image of the map $\pi_*(HR \wdg g)\circ \pis h_T$ is given by the image of the Hurewicz map 
 \[ \pi_*(h_{HR \wdg Z})\co \pi_*(\sph \wdg HR \wdg Z)\to \pi_*(HR \wdg HR \wdg Z)\]
 of $HR \wdg Z$. Note that $h_{HR \wdg Z}$ is  induced by the unit map of $HR$ as usual. Therefore, it is sufficient to show that the image of $\pis(HR \wdg h_Z)$ agrees with the image of $ \pi_*(h_{HR \wdg Z})$.

The map $HR \wdg h_Z$ is  the canonical map
\[HR \wedge  Z \cong HR \wedge \Sp \wedge  Z \to HR \wedge HR \wedge  Z.\]
This is the same as the composite \begin{equation}\label{eq twisted hurewicz}
  HR \wedge  Z \cong \Sp \wedge HR \wedge Z \xrightarrow{h_{HR \wdg Z}} HR \wedge HR \wedge  Z 
      \xrightarrow{\tau \wedge id} HR \wedge HR \wedge  Z
\end{equation}
where $\tau$ is the transposition map of the monoidal structure. Since the map $h_{HR \wdg Z}$ in the middle of the composite in \eqref{eq twisted hurewicz} induces $ \pi_*(h_{HR \wdg Z})$, it is sufficient to show that $\pi_*(\tau \wdg id)$ is the identity map on the  image of $\pi_*(h_{HR \wdg Z})$. 

 By hypothesis, the underlying spectrum of $Z$ is a wedge of suspensions of the sphere spectrum. Let 
\[E = \vee_{a \in A} \Sigma^{\lv a \rv}\sph\]
be weakly equivalent to $Z$ as a spectrum where $A$ is a graded set. Since $E$ is cofibrant and $Z$ is fibrant, there is a weak equivalence of spectra $E \we Z$.

This equivalence induces the horizontal maps in the following commuting diagram of $\sph$-modules. 
\begin{equation*} \label{diag ki ne diagram}
 \begin{tikzcd}
 HR \wdg E \arrow[r,"h_{HR \wdg E}"]  \ar[d,"\simeq"]& HR \wedge HR \wdg E   \ar[r,"\tau \wdg id"]\ar[d,"\simeq"] &HR \wdg HR \wdg E\ar[d,"\simeq"]\\
  HR \wedge Z \arrow[r,"h_{HR \wdg Z}"]  & HR \wedge HR \wdg Z  \ar[r,"\tau \wdg id"] &HR \wdg HR \wdg Z 
 \end{tikzcd}
 \end{equation*}
Here, $h_{HR \wdg E}$ denotes the canonical map that induces the Hurewicz map of $HR \wdg E$ in homotopy. In order to show that $\pi_*(\tau \wdg id)$ (of the bottom row) is the identity map on the image of $\pi_*(h_{HR \wdg Z})$, it is sufficient to show that $\pi_*(\tau \wdg id)$ (of the top row) is given by the identity map on the image of $\pi_*(h_{HR \wdg E})$. For this, it is sufficient to show that the composite of the maps on the top row is given by $\pi_*(h_{HR \wdg E})$ in homotopy.

Note that the canonical $R$-module basis elements of \[\pi_*(HR \wdg E) =\pi_* (HR \wedge  (\vee_{a \in A}\Sigma^{\lvert a \rvert} \Sp)) \cong \bigoplus_{a \in A} \Sigma^{\lvert a \rvert} R\]
are also abelian group generators because $R = \Z/(m)$ for some integer $m$. Therefore,  it is sufficient to show that 
\[\pi_*(\tau \wdg id) \circ \pis(h_{HR \wdg E})(x) = \pis(h_{HR \wdg E})(x)\] for every canonical basis element $x$. Such an $x$ is represented by a map \[u_{HR} \wdg i_a \co \Sp \wedge \Sigma^{\lvert a \rvert} \Sp \to  HR \wedge  (\vee_{a \in A}\Sigma^{\lvert a \rvert} \Sp)=HR \wdg E\]  where  $i_a$ is the inclusion of the cofactor corresponding to an $a \in A$.

In other words, it is sufficient to show that the composite
\[\Sp \wedge \Sigma^{\lvert a \rvert}\sph \xrightarrow{u_{HR} \wdg i_a } HR \wdg E \xrightarrow{h_{HR \wdg E}} HR \wdg HR \wdg E \xrightarrow{\tau \wdg id} HR \wdg HR \wdg E\]
agrees with the composite 
\[\Sp \wedge \Sigma^{\lvert a \rvert} \sph\xrightarrow{u_{HR} \wdg i_a} HR \wdg E \xrightarrow{h_{HR \wdg E}} HR \wdg HR \wdg E.\]
To see this, note that the  composite maps above are of the form $\upsilon \wdg i_a$ and  $\nu \wdg i_a$ respectively where $\upsilon$ and $\nu$ are $\sph$-algebra maps from $\sph$ to $HR \wdg HR$. Since $\sph$ is the initial object in the category of $\sph$-algebras, we deduce that $\upsilon = \nu$. Therefore, the two composites above agree as claimed.

\end{proof}

\section{E-infinity $\fp$-DGAs are not extension} \label{sec thms nonextension}

This section is devoted to the proof of  Theorems \ref{thm 1} and \ref{thm nonext for p is 2}. We restate these theorems below. 

\begin{theorem} (Theorem \ref{thm 1}) \label{thm 1 restated}
Let $Y$ be an $E_\infty$ DGA. If $Y$ is quasi-isomorphic to an $E_\infty$ $\F_p$-DGA then $Y$ is not an extension $E_\infty$ DGA.
\end{theorem}

\begin{theorem} (Theorem \ref{thm nonext for p is 2}) \label{thm nonext for p is 2 restated}
Let $X$ be a DGA. If $X$ is quasi-isomorphic to an $\F_2$-DGA then $X$ is not an extension DGA.

\end{theorem}

In the proof of these theorems, we use the ring structure and the Dyer--Lashof operations on $\pi_*(H\F_p \wedge H\Z) = H{\F_p}_* H\Z$. For odd $p$, the ring structure is given by 
\[  H{\F_p}_* H\Z \cong \F_p[\zeta_r \ \lvert \ 
 r\geq 1] \otimes \Lambda(\overline{\tau}_s \ \lvert \  s \geq 1) \]
where the degrees of the generators are the same as those of the dual Steenrod algebra. Note that  $H{\F_p}_* H\Z$ has the same generators as the dual Steenrod algebra except that $H{\F_p}_* H\Z$ does not contain the degree $1$ generator $\tau_0$. Indeed, the map $H{\F_p}_* H\Z \to H{\F_p}_* H\F_p= \mathcal{A}_*$ induced by $H\Z \to H\F_p$ is the canonical inclusion \cite[\rom{2}.10.26]{schwede2007untitled}. This inclusion is induced by a map of commutative $\hfp$-algebras and therefore it preserves the Dyer-Lashof operations. Therefore through this map, the Dyer--Lashof operations on the dual Steenrod algebra determine the Dyer--Lashof operations on $H{\F_p}_* H\Z$, see \cite[\rom{3}.2]{brunerh}. 

For $p=2$, we have 
\[ H{\F_2}_* H\Z = \F_2[\zeta_1^2] \otimes \F_2[\zeta_r \ \lvert \ 
 r\geq 2]\]
where $\lvert \zeta_i \rvert = 2^i-1$ for $i \geq 2$ and $\lvert \zeta_1^2 \rvert = 2$. Again, the canonical map $H{\F_2}_* H\Z \to H{\F_2}_* H\F_2=\mathcal{A}_*$ is the canonical inclusion and this determines the Dyer--Lashof operations on $H{\F_2}_* H\Z$.

 For the rest of this section, we assume that $\hz$ is cofibrant as a commutative $\sph$-algebra and  $\hfp$ is cofibrant as a commutative $\hz$-algebra in the model structure developed in \cite{Shipley2004convenient}. Since the category of commutative $\hz$-algebras is the same as the category of  commutative $\sph$-algebras under $\hz$, cofibrations of commutative  $\hz$-algebras forget to cofibrations of commutative $\sph$-algebras. Therefore, $\hz \to \hfp$ is also a cofibration of commutative $\sph$-algebras. This ensures that $\hfp$ is also cofibrant as a commutative $\sph$-algebra and therefore the functor $\hfp \wdg -$ preserves all weak equivalences, see the proof of Proposition 4.7 in \cite{Shipley2004convenient}.

We start by proving the following lemma. This lemma is obvious if one assumes that for a map of discrete commutative  rings $R \to R'$, the Quillen equivalences of (\cite{richter2017algebraicmodel}) \cite{Shipley2007DGAs}   are compatible with the restriction of scalars functors from ($E_\infty$) $R'$-DGAs to ($E_\infty$) $R$-DGAs and from (commutative) $HR'$-algebras to (commutative) $HR$-algebras. However, there is no such  compatibility result available in the literature and proving it is beyond the scope of this work.
\begin{lemma}\label{lem map hfp to x}
Let $X$ be a ($E_\infty$) DGA that is quasi-isomorphic to an ($E_\infty$) $\fp$-DGA. In this situation, there is a map of (commutative) $\hz$-algebras 
\[c(\hfp) \to HX\]
where $HX$ denotes a fibrant (commutative) $\hz$-algebra corresponding to the ($E_\infty$) DGA $X$. Furthermore, $c(\hfp)$ denotes a cofibrant replacement of  $\hfp$  in (commutative) $\hz$-algebras. In the commutative case, $\hfp$ is cofibrant in commutative $\hz$-algebras due to our standing assumptions and therefore the cofibrant replacement above may be omitted.
\end{lemma}
\begin{proof}
We only prove the $E_\infty$ case; the associative case follows in a similar manner. Assume that  we are using a unital $E_\infty$ operad, i.e.\ an operad given by the monoidal unit $\fp$ in operadic degree zero. Barratt-Eccles operad is an example of a unital $E_\infty$-operad \cite{Berger2004combinatorialoperadactions}. In this situation, $\fp$ is the free $E_\infty$ $\fp$-DGA generated by the trivial $\fp$-chain complex $0$. Therefore, $\fp$ is the initial object in $E_\infty$ $\fp$-DGAs. This, together with the fact that $X$ is quasi-isomorphic to an $E_\infty$ $\fp$-DGA implies that there is a map $\fp\to X$ in the homotopy category  of $E_\infty$ DGAs.

The equivalence of categories between the homotopy categories of commutative  $\hz$-algebras and $E_\infty$ DGAs imply that there is also a map $\hfp \to HX$ in the homotopy category of commutative $\hz$-algebras. Since $HX$ is fibrant in commutative $\hz$-algebras, there is a map $c(\hfp) \to HX$ of commutative $\hz$-algebras as desired.
\end{proof}

The following starts with the proof of Theorem \ref{thm 1} and at the end, we mention how this also shows Theorem \ref{thm nonext for p is 2}.
\begin{proof}[Proof of Theorems \ref{thm 1}  and \ref{thm nonext for p is 2}]

Assume to the contrary that there is an extension $E_\infty$ DGA $X$ that is quasi-isomorphic to an $E_\infty$ $\fp$-DGA.  It follows by Lemma \ref{lem map hfp to x} that there is a map $\hfp \to HX$ of commutative $\hz$-algebras where $HX$ denotes a fibrant commutative $\hz$-algebra corresponding to the $E_\infty$ DGA $X$. In particular, the $\hz$-structure map $\hz \to HX$ of $HX$ factors as 
\[\hz \xrightarrow{\varphi_{\hfp}} \hfp \to HX\]
where $\varphi_{\hfp}$ denotes the canonical map.

Since $X$ is a $\z$-extension $E_\infty$ DGA, there is a cofibrant commutative $\Sp$-algebra $Y$ such that $H\Z \wedge Y$ is weakly equivalent to $HX$ in commutative $H\Z$-algebras.

Note that $H\Z \wedge Y$ is cofibrant as a commutative $H\Z$-algebra; this is the case  because $H\Z \wedge - $ is a left Quillen functor from commutative $\Sp$-algebras to commutative $H\Z$-algebras and therefore it preserves cofibrant objects. 

Since $HX$ is fibrant and $H\Z \wedge  Y$ is cofibrant, there is a weak equivalence of commutative $H\Z$-algebras $  \psi  \co H\Z \wedge Y \we HX$. Because $\psi$ is a map of commutative $\hz$-algebras, we obtain a commutative diagram

\begin{equation*} \label{diag ex 3}
 \begin{tikzcd}
& H\Z \arrow[rd,"\varphi_{\hfp}"] \arrow[ldd,swap,"\varphi_{H\Z \wedge Y}"]& 
\\
&&\hfp \ar[rd]&\\
 H\Z \wedge Y \arrow[rrr,"\simeq",swap] \arrow[rrr,"\psi"]
 & & & HX
 \end{tikzcd}
 \end{equation*}
where the composite on the right from $\hz$ to $HX$ is the composite given above. The map   $\varphi_{H\Z \wedge Y}$ is the $H\Z$-structure map of $H\Z \wedge Y$ which is given by $H\Z \cong H\Z \wedge \Sp \to H\Z \wedge Y$. 

Applying the homology functor ${H\F_p}_*$ to this diagram and inverting $\hfps \psi$, we obtain the following.
\begin{equation*} 
 \begin{tikzcd}
& {H\F_p}_* H\Z \arrow[rd,"{H\F_p}_* \varphi_{\hfp}"] \arrow[ldd,swap,"{H\F_p}_*\varphi_{H\Z \wedge Y}"]& & 
\\
& & \hfps \hfp \ar[dr]&\\
 {H\F_p}_* H\Z \otimes {H\F_p}_* Y 
 & & & \hfps HX \arrow[lll,"\cong",swap] \arrow[lll,"({H\F_p}_* \psi)^{-1}"]
 \end{tikzcd}
 \end{equation*}
 
 By the Künneth spectral sequence in \cite[\rom{4}.4.1]{ekmm}, ${H\F_p}_* ( H\Z \wedge Y ) \cong {H\F_p}_* H\Z \otimes {H\F_p}_* Y$ and the morphism on the left is given by
 \begin{equation}\label{eq varphihzy a is a ot 1}
 {H\F_p}_*\varphi_{H\Z \wedge Y} (a) = a \otimes 1. 
  \end{equation}
 
 Since the diagram above commutes, we obtain that ${H\F_p}_*\varphi_{H\Z \wedge Y} $ factors as 
 \begin{equation}\label{eq composite for varphi hz wdg y}
     {H\F_p}_*\varphi_{H\Z \wedge Y} \co \hfps \hz \xrightarrow{\hfps \varphi_{\hfp}}\hfps \hfp \xrightarrow{f} {H\F_p}_* H\Z \otimes {H\F_p}_* Y
 \end{equation}
 where the second map $f$ is the composite in the triangle above starting from $\hfps \hfp$ and ending in the  bottom left corner. Both maps in the composite above are ring maps that preserve the Dyer-Lashof operations.

Let $p$ denote an odd prime, we discuss the case $p=2$ at the end of this proof. We have  $\beta \mathrm{Q^1}\tau_0 = \zeta_1$ (up to a unit we are going to omit) in $H{\F_p}_* H\F_p$. 
Note that $f(\zeta_1) = \zeta_1 \otimes 1$. This follows by considering the composite in \eqref{eq composite for varphi hz wdg y}, Equation \eqref{eq varphihzy a is a ot 1}  and by noting that $\hfps \varphi_{\hfp}$ is the canonical inclusion. Since $f$ preserves Dyer-Lashof operations, we obtain the following. 
\[ \beta \mathrm{Q^1}f(\tau_0) = f(\beta \mathrm{Q^1}\tau_0) = f(\zeta_1) = \zeta_1\otimes 1\]
We conclude that $\beta \mathrm{Q^1} f(\tau_0) = \zeta_1\otimes 1$ in $H{\F_p}_*H\Z \otimes H{\F_p}_* Y$.

 We obtain a contradiction by showing that there is no $z$ in $H{\F_p}_*H\Z \otimes H{\F_p}_* Y$ that satisfies $\beta \mathrm{Q^1} z = \zeta_1\otimes 1$, i.e.\ there is no candidate for $f(\tau_0)$. For an element of the form $1 \otimes y \in H{\F_p}_*H\Z \otimes H{\F_p}_* Y$, $\beta \mathrm{Q^1} (1\otimes y) = 1 \otimes \beta \mathrm{Q^1} y$ does not contain $\zeta_1 \otimes 1$ as a summand. Now consider an element of the form $a \otimes y \in H{\F_p}_*H\Z \otimes H{\F_p}_* Y$ with $\lvert a \rvert >0$. By the Cartan formula and the fact that the Bockstein operation  is a derivation, 
   $\beta \mathrm{Q^1} (a \otimes y)$ is a sum of elements of the form $a^{\prime} \otimes y^{ \prime}$ where $a^{'}$ is obtained by applying a Dyer-Lashof operation to $a$. In particular, $\lvert a^{\prime} \rvert > \lvert a \rvert \geq \lvert \zeta_1 \rvert$, therefore $\beta \mathrm{Q^1} (a \otimes y)$ does not contain $\zeta_1 \otimes 1$ as a summand neither. We deduce that $\beta \mathrm{Q^1}z$ does not contain $\zeta_1 \otimes 1$ as a summand for all $z \in H{\F_p}_*H\Z \otimes H{\F_p}_* Y$.
 
 For $p=2$, we do not need to use the Dyer-Lashof operations. In this case, we have $f(\zeta_1^2) = \zeta_1^2 \otimes 1$ due to the composite in \eqref{eq composite for varphi hz wdg y}. We obtain that $f(\zeta_1)^2 = \zeta_1^2 \otimes 1.$ However, there is no element in $H{\F_2}_* H\Z \otimes H{\F_2}_* Y$ that squares to $\zeta_1^2 \otimes 1$. Since this does not use Dyer-Lashof operations, these arguments also work for DGAs and $H\Z$-algebras and provide a proof of Theorem \ref{thm nonext for p is 2}.
\end{proof}

\section{Formal DGAs to $H\Z$-algebras} \label{sec formaldgas to hzalgebras}
This section is devoted to the proof of Proposition \ref{prop formal hzalg} which provides an explicit  description of the $HR$-algebra corresponding to a formal $R$-DGA whose homology satisfies the hypothesis of Theorem \ref{thm formal are extension}. This  description  provides Theorem \ref{thm formal are extension}. Recall that we also use Proposition \ref{prop formal hzalg} to obtain Corollary \ref{prop formal topological equivalences}. 

We work in several different monoidal categories in this section. When we work in the category of chain complexes or in the category of differential graded algebras, we denote the monoidal product by $\otimes$. In all the other cases, we let $\wedge$ denote the monoidal product. In particular, we contradict the notation we used in the previous sections and denote the monoidal product of $HR$-modules by $\wedge$ instead of $\wedge_{HR}$. In this section, $HR$ denotes the Eilenberg Mac Lane spectrum of a general discrete commutative ring as in \cite[1.2.5]{hovey2000symmetricspectra}. 

Let $X$ be an $R$-DGA satisfying the hypothesis of Theorem \ref{thm formal are extension}. Recall from Remark \ref{rem on graded monoid rings} that  there is a monoid $M$ in graded pointed sets for which  $H_*(X) \cong R \langle M \rangle$ as  $R$-algebras where the underlying $R$-module of $ R \langle M \rangle$ is the free graded $R$-module over  the graded set  $M_{-}$ obtained by removing the base point of $M$. Furthermore, the multiplication on $R\langle M \rangle$ is the canonical one induced by that of $M$. For the rest of this section, let $M$ denote a monoid in non-negatively graded pointed sets. 

\subsection{A monoid object corresponding to $M$}

Here, we construct a monoid in a general monoidal category by using $M$. Furthermore, we show that this construction is preserved by  strong monoidal Quillen Pairs.

We start by explaining a notation we use for the symmetric monoidal pointed model categories we consider in this section. For a cofibrant $C$, $\Sigma C$ denotes the pushout of the diagram $\Bar{*} \lcof C \cof \Bar{*}$ where $\Bar{*}$ is obtained by a factorization $C \cof \Bar{*} \trfib *$ of the map $C \to *$ by a cofibration followed by a trivial fibration and $*$ denotes the final object. For the unit $\mathbb{I}$ of the monoidal structure, $\Sigma^{k} \mathbb{I}$ denotes $(\Sigma{\mathbb{I}})^{\wedge k}$ for $k>0$ and denotes $\mathbb{I}$ for $k=0$. 
\begin{construction}\label{constr for graded monoids}
Let ($\mathcal{C}$, $\wedge,$ $\mathbb{I}$) denote a pointed cofibrantly generated closed symmetric monoidal model category whose unit $\mathbb{I}$ is cofibrant. Furthermore, assume that $\mathcal{C}$ satisfies the monoid axiom and the smallness axioms of \cite{schwede2000algebrasandmodules}. This implies that the category of modules over a monoid in $\mathcal{C}$ carries an induced model structure where the fibrations and the weak equivalences are precisely  those of $\mathcal{C}$ \cite[4.1]{schwede2000algebrasandmodules}. For a given $M$ as above, we construct a monoid structure on 
$$\vee_{m \in M_{-}} \Sigma^{\lvert m \rvert} \mathbb{I}$$
where $\vee$ denotes the coproduct in $\mathcal{C}$.
The multiplication map 
\begin{equation} \label{eq monoidforformaldga}
 \left(\vee_{m \in M_{-}} \Sigma^{\lvert m \rvert} \mathbb{I} \right) \wedge \left(\vee_{n \in M}\Sigma^{\lvert n \rvert} \mathbb{I}\right)   \cong \vee_{(m,n) \in M \times M}  \Sigma^{\lvert m \rvert + \lvert n \rvert}\mathbb{I} \to \vee_{m \in M_{-}} \Sigma^{\lvert m \rvert} \mathbb{I}
\end{equation}
is given (on the cofactor corresponding to $(m,n) \in M \times M$) by the inclusion of the cofactor corresponding to $mn \in M$ if $mn \neq 0$ and given by the zero map if $mn = 0$. Note that in a pointed model category, there is a unique zero map between every pair of objects which is defined to be the map that factors through the point object.  One easily checks that the multiplication above is associative and unital. 


If $E$ is a commutative monoid in $\mathcal{C}$, then the category of $E$-modules is also a symmetric monoidal model  category \cite[4.1]{schwede2000algebrasandmodules}. We let $\vee_{m \in M_{-}} \Sigma^{\lvert m \rvert} E$ denote the monoid we obtain by applying the construction above in the category of $E$-modules. In particular, $\vee_{m \in M_{-}} \Sigma^{\lvert m \rvert} E$ is an $E$-algebra.
\end{construction}
Using the construction above, we obtain an $HR$-algebra  
$\vee_{m \in M_{-}} \Sigma^{\lvert m \rvert} HR$. In order to prove Theorem \ref{thm formal are extension}, we go through the zig-zag of Quillen equivalences between the model categories of $R$-DGAs and $HR$-algebras to show that  the $HR$-algebra corresponding to the formal $R$-DGA with homology $R\langle M \rangle$ is given by $\vee_{m \in M_{-}} \Sigma^{\lvert m \rvert} HR$. We deduce that the formal $R$-DGA with homology $R \langle M \rangle$ is $R$-extension by showing that $\vee_{m \in M_{-}} \Sigma^{\lvert m \rvert} HR$ is weakly equivalent to $HR \wdg c(\vee_{m \in M_{-}} \Sigma^{\lvert m \rvert} \sph) $ in $HR$-algebras where $c$ denotes the cofibrant replacement functor in $\sph$-algebras. For this, we start with the following lemmas.

\begin{lemma}\label{lem weak equivalence between suspensions}
Assume that  $(\mathcal{C}, \wedge, \ic )$ and $(\dd,\wdg, \id)$ are pointed and closed symmetric monoidal model categories with cofibrant units. Furthermore, let 
\begin{equation*}
    \begin{tikzcd}
    & \cc \arrow[r, "F", shift left]
 & \arrow [l,"G", shift left] \dd  
    \end{tikzcd}
\end{equation*}
be a Quillen pair where $F$ denotes the left adjoint. If there is a weak equivalence $\upsilon \co F(\ic) \we \id$, then there exists a weak equivalence \[\varphi \co F( \Sigma \ic) \we \Sigma \id.\]
\end{lemma}
\begin{proof}
By factoring  the map $\ic \to *$ by a cofibration followed by a trivial fibration, we obtain a factorization $ F(\ic)   \cof F(\Bar{*}) \we F(*) \cong *$. Note that the  isomorphism follows by the fact that $F$ is a left adjoint functor between pointed categories. To see that the second map is a weak equivalence, note that $*$ is cofibrant in the pointed model category $\cc$   and that $F$ preserves all weak equivalences between cofibrant objects. 
Similarly, we have a factorization $\id \cof \Bar{*} \trfib *$ consisting of a cofibration followed by a trivial fibration. We use the equivalence  $\upsilon \co F(\ic) \we \id$ and the lift in the following square
\begin{equation*}
    \begin{tikzcd}
     F(\ic) \arrow[r,rightarrow,"\sim",swap] \ar[r,"\upsilon"] \arrow[d,rightarrowtail] & \id \arrow[r,rightarrow]& \Bar{*} \arrow[d,twoheadrightarrow,"\sim"]
    \\
     F(\Bar{*}) \arrow[urr,rightarrow,dashed,"\sim"] \arrow[rr,,"\sim",rightarrow]& &  F(*) \cong *
    \end{tikzcd}
\end{equation*}
to obtain a weak equivalence of diagrams 
\[\begin{tikzcd}[sep=small] (F(\Bar{*})  \arrow[r,leftarrowtail]& F(\ic) \arrow[r,rightarrowtail]& F(\Bar{*})) \arrow[r,"\sim",rightarrow] &  (\Bar{*} \arrow[r,leftarrowtail]& \id \arrow[r,rightarrowtail]& \Bar{*}).\end{tikzcd}\] This in turn gives a map $\varphi$ of the corresponding pushouts of these diagrams. This is a weak equivalence because these are diagrams consisting only of cofibrations between cofibrant objects; therefore their pushout is the homotopy pushout. Since the pushout of the diagram on the left hand side  is $F(\Sigma \ic)$ and the pushout of the diagram on the right hand side is $\Sigma \id$, we obtain  the weak equivalence
\[\varphi \co F(\Sigma \ic) \we   \Sigma \id\]
we wanted to construct.

\end{proof}

\begin{lemma}\label{lem strong monoidal preserve the monoid construction}
Assume that  $(\mathcal{C}, \wedge, \ic )$ and $(\dd,\wdg, \id)$ are pointed and closed symmetric monoidal model categories with cofibrant units as in Construction \ref{constr for graded monoids}. Furthermore, let 
\begin{equation*}
    \begin{tikzcd}
    & \cc \arrow[r, "F", shift left]
 & \arrow [l,"G", shift left] \dd  
    \end{tikzcd}
\end{equation*}
be a Quillen pair where the left adjoint $F$ is a strong monoidal functor. In this situation,  $Fc(\vee_{m \in M_{-}} \Sigma^{\lvert m \rvert} \ic)$ and $\vee_{m \in M_{-}} \Sigma^{\lvert m \rvert} \id$ are weakly equivalent as monoids in $\dd$ where $c$ denotes the cofibrant replacement functor in the model category of monoids in $\cc$ \cite[4.1]{schwede2000algebrasandmodules}.
\end{lemma}
\begin{proof}
 Since $F$ is a strong monoidal functor, we have a natural isomorphism 
$F(X) \wedge F(Y) \cong F(X\wedge Y)$ and an isomorphism $F(\ic) \cong \id$. This isomorphism provides the weak equivalence $\upsilon$ in the hypothesis of Lemma \ref{lem weak equivalence between suspensions}. Therefore, there is a weak equivalence  $\varphi \co F(\Sigma \ic) \we   \Sigma \id$. 
 
Using $\varphi$, we produce a weak equivalence of monoids:
\[\begin{tikzcd}\Phi \co F(\vee_{m \in M_{-}} \Sigma^{\lvert m \rvert} \ic)\cong \vee_{m \in M_{-}} F(\Sigma^{\lvert m \rvert} \ic) \arrow[r,rightarrow,"\sim"] &\vee_{m \in M_{-}} \Sigma^{\lvert m \rvert} \id .\end{tikzcd}\] Here, $\Phi$ is the coproduct of maps given by the isomorphism $F(\ic) \cong \id$ for $\lvert m \rvert = 0$ and the map
\[\begin{tikzcd} F(\Sigma^{\lvert m \rvert} \ic) = F((\Sigma \ic)^{\wedge \lvert m \rvert}) \cong F(\Sigma \ic)^{\wedge \lvert m \rvert}  \arrow[r,swap,"\sim"] \arrow[r,"\varphi^{\wedge \lvert m \rvert}"] & (\Sigma \id)^{\wedge \lvert m \rvert} = \Sigma^{\lvert m \rvert} \id \end{tikzcd}\] for $\lvert m \rvert>0$. Here, the first and the last equalities follow by our definition of $\Sigma^k-$ for $k>0$ and the second isomorphism comes from the strong monoidal structure of $F$. Also, note that  $\varphi^{\wedge \lvert m \rvert}$ is a weak equivalence because it is a smash product of weak equivalences between cofibrant objects. Since $\Phi$ is a coproduct of weak equivalences between cofibrant objects, it is a weak equivalence by Lemma 4.7 of \cite{White2017modelstructuresoncommutative}. It is clear that $\Phi$ is a map of monoids by the definition of the monoidal structure on both sides and from the fact that left adjoint functors between pointed categories  preserve the zero maps. This shows that $\Phi$ is a weak equivalence of monoids between $F(\vee_{m \in M_{-}} \Sigma^{\lvert m \rvert} \ic)$ and $\vee_{m \in M_{-}} \Sigma^{\lvert m \rvert} \id$.

Therefore, in order to  finish the proof of the  lemma, it is sufficient to show that the monoids $Fc(\vee_{m \in M_{-}} \Sigma^{\lvert m \rvert} \ic)$ and $F(\vee_{m \in M_{-}} \Sigma^{\lvert m \rvert} \ic)$ are weakly equivalent. Since $c$ is the cofibrant replacement functor in the category of monoids, there is a weak equivalence of monoids
\[\begin{tikzcd} f \co c(\vee_{m \in M_{-}} \Sigma^{\lvert m \rvert} \ic) \arrow[r,rightarrow,"\sim"] & \vee_{m \in M_{-}} \Sigma^{\lvert m \rvert} \ic. \end{tikzcd}\] By Theorem 4.1 of \cite{schwede2000algebrasandmodules}, the source of $f$ is cofibrant in $\cc$. This means that $f$ is a weak equivalence between cofibrant objects and therefore $F(f)$ is a weak equivalence. Furthermore, $F(f)$ is a weak equivalence of monoids because a strong monoidal functor preserves maps of monoids. Therefore, the monoids $Fc(\vee_{m \in M_{-}} \Sigma^{\lvert m \rvert} \ic)$ and $F(\vee_{m \in M_{-}} \Sigma^{\lvert m \rvert} \ic)$ are weakly equivalent as desired.

\end{proof}

\subsection{From DGAs to $H\Z$-algebras}

Here, we carry out our discussion for the case $R=\Z$. The case of general discrete commutative ring $R$ follows similarly. 

The DGA corresponding to an $H\Z$-algebra is obtained using the following zig-zag of monoidal Quillen equivalences of  \cite{Shipley2007DGAs}
\begin{equation*} \label{diag zigzag of Shipley}
 \begin{tikzcd}
& H\Z \mbox{-}  \mathcal Mod \arrow[r, "Z", shift left]
 & \arrow [l,"U", shift left] Sp^{\Sigma}(s \mathcal AB) \arrow[r,"\phi^{*}N",swap, shift right] &  \arrow [l,"L",swap, shift right] Sp^{\Sigma}(\mathcal Ch^+) \arrow[r,"D", shift left]&  \arrow [l,"R", shift left]  \mathcal Ch 
  & 
 \end{tikzcd}
 \end{equation*}
where the left adjoints are the top arrows and the pairs $(Z,U)$ and $(D,R)$ are both strong monoidal Quillen equivalences. The pair $(L,\phi^*N)$ is a weak monoidal Quillen equivalence. See \cite[3.6]{Schwede2003monoidalquillen} for the definitions of strong monoidal Quillen equivalences and weak monoidal Quillen equivalences. We often use the fact that the model categories in the zig-zag above are pointed. 

Since each Quillen equivalence in the zig-zag is a monoidal Quillen equivalence, there is an induced  zig-zag of Quillen equivalences of the corresponding model categories of monoids. This gives the induced derived functors $\mathbb{H} \co \DGA \to H\Z \alg$ and $\Theta \co H\Z \alg \to \DGA$ in Theorem 1.1 of \cite{Shipley2007DGAs}. We have 
 \[\Theta = Dc \phi^* N Zc  \ \ \ \ \ \mathbb{H} = U L^{mon} cR \]
where $L^{mon}$ is the induced left adjoint at the level of monoids and $c$ denotes the cofibrant replacement functors in the corresponding model category of monoids. See Section 3.3 of \cite{Schwede2003monoidalquillen} for a definition of the induced left adjoint at the level of monoids.

In the lemmas below, $\mathbb{I}_1$ and $\I_2$ denote the monoidal units of $Sp^{\Sigma}(s \mathcal AB)$ and $Sp^{\Sigma}(\mathcal Ch^+)$ respectively. 
Note that the units of the monoidal model categories in the zig-zag above are all cofibrant. By Construction \ref{constr for graded monoids}, we have the monoids  $\vee_{m \in M_{-}} \Sigma^{\lvert m \rvert} \I_1$ and $\vee_{m \in M_{-}} \Sigma^{\lvert m \rvert} \I_2$ in $Sp^{\Sigma}(s \mathcal AB)$ and $Sp^{\Sigma}(\mathcal Ch^+)$ respectively.  
\begin{lemma} \label{lemma first and last quillen equivalences}
 In $Sp^{\Sigma}(s \mathcal AB)$, $Zc(\vee_{m \in M_{-}} \Sigma^{\lvert m \rvert} H\Z)$ and $\vee_{m \in M_{-}} \Sigma^{\lvert m \rvert} \I_1$ are weakly equivalent as monoids. In $\mathcal Ch$, $Dc(\vee_{m \in M_{-}} \Sigma^{\lvert m \rvert} \I_2)$ and the formal DGA with homology $\Z \langle M \rangle$ are quasi-isomorphic as DGAs.
\end{lemma}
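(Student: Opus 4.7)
The plan is to handle both claims in parallel, exploiting the strong monoidality of $Z$ and $D$. Strong monoidal left adjoints preserve colimits, units, the pointed structure, the pushouts defining $\Sigma$, and the zero maps (which factor through the terminal object). Consequently $Z$ and $D$ carry the monoid construction of \eqref{eq monoidforformaldga} to an analogous monoid in the target category, giving natural isomorphisms of monoids
\[
Z\bigl(\vee_{m \in M} \Sigma^{\lvert m \rvert} H\Z\bigr) \cong \vee_{m \in M} \Sigma^{\lvert m \rvert} \I_1 \quad \text{and} \quad D\bigl(\vee_{m \in M} \Sigma^{\lvert m \rvert} \I_2\bigr) \cong \bigoplus_{m \in M} \Sigma^{\lvert m \rvert} \Z.
\]
The target of the second isomorphism is a chain complex with zero differential whose multiplication, dictated by \eqref{eq monoidforformaldga}, makes its underlying graded ring equal to $\Z\langle M \rangle$. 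A DGA with zero differential equal to its own homology ring is by definition the formal DGA with that homology, so this identifies $D(\vee_{m\in M}\Sigma^{\lvert m \rvert}\I_2)$ with the formal DGA with homology $\Z\langle M \rangle$.

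It remains to absorb the cofibrant replacement. By \cite[3.3]{Schwede2003monoidalquillen}, the strong monoidal Quillen equivalences $(Z,U)$ and $(D,R)$ lift to Quillen equivalences on the respective categories of monoids; in particular $Z$ and $D$ preserve weak equivalences between cofibrant monoids. The wedges $\vee_{m\in M}\Sigma^{\lvert m \rvert}H\Z$ and $\vee_{m\in M}\Sigma^{\lvert m \rvert}\I_2$ are built as pushouts of cofibrations from the cofibrant units, and the multiplications in \eqref{eq monoidforformaldga} use only cofactor inclusions and zero maps with no identifications between distinct generators, so these wedges are cofibrant as monoids. The trivial fibrations $c(X)\trfib X$ are therefore weak equivalences between cofibrant monoids, and applying $Z$ or $D$ followed by the identifications above yields the asserted weak equivalences of monoids.

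The main obstacle is the careful bookkeeping required to verify that strong monoidality genuinely preserves the monoid construction \eqref{eq monoidforformaldga}, that is, that the comparison isomorphisms of underlying objects intertwine the multiplications cofactor by cofactor, and that the pointed structure is tracked so that zero maps and iterated suspensions are carried to their target-side analogues. A secondary point is the cofibrancy of the wedges as monoids, which relies on the absence of nontrivial relations between distinct cofactors in the multiplication and on the free-monoid-like structure afforded by the relevant model categories.
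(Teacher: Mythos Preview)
Your strategy—using strong monoidality of $Z$ and $D$ to transport the monoid structure—is the same as the paper's, but two of your steps fail as written.

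First, the assertion that $\vee_{m\in M}\Sigma^{|m|}H\Z$ and $\vee_{m\in M}\Sigma^{|m|}\I_2$ are cofibrant \emph{as monoids} is unjustified and in general false. Your reasoning (pushouts of cofibrations from the unit, cofactor inclusions, ``no identifications'') establishes at best cofibrancy of the underlying module. The multiplication on $M$ imposes relations—$m\cdot n$ lands in a prescribed single cofactor, or equals $0$—and monoids presented with such relations are typically not cofibrant in the Schwede--Shipley model structure on monoids (think of $\Z[x]/(x^2)$ as a DGA). The paper never claims monoid-level cofibrancy. Instead it notes that the cofibrant replacement $c(\vee_m\Sigma^{|m|}H\Z)$ has cofibrant underlying $H\Z$-module by \cite[4.1]{schwede2000algebrasandmodules}, so the replacement map $f$ is a weak equivalence between objects that are cofibrant \emph{as modules}, and the left Quillen functor $Z$ preserves that.

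Second, your claim that $\bigoplus_m\Sigma^{|m|}\Z$ has zero differential contradicts the paper's conventions: $\Sigma C$ is the pushout $\bar{*}\amalg_C\bar{*}$ for a fixed acyclic $\bar{*}$ containing $C$, so $\Sigma\Z$ and its tensor powers $\Sigma^k\Z=(\Sigma\Z)^{\otimes k}$ carry nonzero differentials. The paper explicitly flags this and supplies a further quasi-isomorphism to the honestly formal DGA $\vee_m\sigma^{|m|}\Z$, where $\sigma^n\Z$ is concentrated in degree $n$. A related issue undermines your claimed isomorphism $Z(\vee_m\Sigma^{|m|}H\Z)\cong\vee_m\Sigma^{|m|}\I_1$: the left adjoint $Z$ preserves the pushout, but the factorization $\I_1\cof\bar{*}\trfib *$ used to define $\Sigma\I_1$ is chosen independently in the target, so $Z(\Sigma H\Z)$ is only weakly equivalent to $\Sigma\I_1$, not isomorphic. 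The paper produces this comparison $\varphi$ via an explicit lift and then assembles the maps $\varphi^{\wedge|m|}$ into a weak equivalence of monoids, checking it is a coproduct of weak equivalences between cofibrant objects.
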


\begin{proof}
The first statement is a direct consequence of Lemma \ref{lem strong monoidal preserve the monoid construction}. We prove the second statement of the lemma. It again follows by Lemma \ref{lem strong monoidal preserve the monoid construction}    that $Dc(\vee_{m \in M_{-}} \Sigma^{\lvert m \rvert} \I_2)$ and $\oplus_{m \in M_{-}} \Sigma^{\lvert m \rvert} \Z$ are quasi-isomorphic as DGAs (i.e.\ weakly equivalent as monoids in $\mathcal Ch$). 

Therefore, it is sufficient to show that  $\oplus_{m \in M_{-}} \Sigma^{\lvert m \rvert} \Z$ is quasi-isomorphic to the  formal DGA with homology $\Z \langle M \rangle$. Let $\Bar{0}$ denote the chain complex consisting of $\Z$ in degrees $0$ and $1$ and the trivial module in the rest of the degrees; its differentials  are trivial except degree 1 where the differential is the identity. There is a factorization $\Z \cof \Bar{0} \trfib 0$  of the trivial map $\Z \to 0$ as a cofibration followed by a trivial fibration.

Let $\sigma \Z$ denote the chain complex consisting of $\Z$ in degree $1$ and the trivial module in rest of the degrees. This is the pushout of the diagram $ \Bar{0} \lcof \Z \to 0$.  

Note that due to our conventions, $\Sigma \Z$ is the pushout of the diagram $ \Bar{0} \lcof \Z \to \Bar{0}$. Since the category of chain complexes is left proper, there is a weak equivalence $ \varphi \co \Sigma \Z   \we \sigma \Z$. Let $\sigma^n \Z$  denote  $(\sigma^1 \Z)^{\otimes n}$. Following Construction \ref{constr for graded monoids}, we obtain a formal DGA $\oplus_{m \in M_{-}} \sigma^{\lvert m \rvert} \Z$.   Similar to the map $\Phi$ in the proof of Lemma \ref{lem strong monoidal preserve the monoid construction}, we  obtain a quasi-isomorphism of DGAs
\[\Phi \co \oplus_{m \in M_{-}} \Sigma^{\lvert m \rvert} \Z  \we \oplus_{m \in M_{-}} \sigma^{\lvert m \rvert} \Z\]
given by the identity map for $\lv m \rv = 0$ and given by $\varphi^{\lv m \rv}$ for $\lv m \rv >0$. This shows that 
 $\oplus_{m \in M_{-}} \Sigma^{\lvert m \rvert} \Z$ and $\oplus_{m \in M_{-}} \sigma^{\lvert m \rvert} \Z$ are quasi-isomorphic as DGAs where the latter is the formal DGA with homology $\Z \langle M \rangle$.
\end{proof}

We state and prove the following two lemmas that we use in the proof of Lemma \ref{leamma middle quillen equivalence}.

\begin{lemma} \label{lemma preserve coproducts}
The functor $\phi^*N$ preserves colimits. 
\end{lemma}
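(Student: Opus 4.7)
The plan is to decompose $\phi^*N$ into the normalization $N$ and the restriction $\phi^*$, and to verify separately that each preserves colimits. Since colimits in any category are preserved by a composite of colimit-preserving functors, this suffices.

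First, I would observe that $N$ is the Dold--Kan normalization applied levelwise to symmetric spectra. Since Dold--Kan is an equivalence of abelian categories between $s\mathcal{AB}$ and $\mathcal{Ch}^+$, the underlying normalization preserves both limits and colimits. Colimits in $Sp^{\Sigma}(s\mathcal{AB})$ and $Sp^{\Sigma}(\mathcal{Ch}^+)$ are computed levelwise in the underlying symmetric sequences, so the levelwise extension of $N$ preserves colimits at the spectrum level as well.

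Next, I would examine $\phi^*$. Under the standard identification, objects of $Sp^{\Sigma}(\mathcal{Ch}^+)$ are modules over the chain complex sphere $\I_2$, while $N$ of a symmetric spectrum in $s\mathcal{AB}$ is naturally a module over $N(\I_1)$; the functor $\phi^*$ is restriction of scalars along the comparison monoid map $\I_2 \to N(\I_1)$ described in \cite{Schwede2003monoidalquillen}. Restriction of scalars along a morphism of monoids in a cocomplete symmetric monoidal category preserves colimits, because colimits of modules are formed in the underlying category and restriction is the identity on underlying objects.

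Combining these two observations yields the lemma. The main subtlety, more bookkeeping than substance, is making precise the identification of the symmetric spectrum categories in play with module categories over the appropriate monoids and checking that colimits in these module categories are indeed computed underlying-categorically; once these identifications are in place, the argument is purely formal, and in particular it immediately specializes to the preservation of coproducts used in the proof of Lemma \ref{leamma middle quillen equivalence}.
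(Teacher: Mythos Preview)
Your proposal is correct and follows essentially the same approach as the paper: decompose $\phi^*N$ into the levelwise Dold--Kan equivalence $N$ and the restriction-of-scalars functor $\phi^*$, and argue that each preserves colimits because colimits in symmetric spectra are computed in the underlying symmetric sequences. The paper cites \cite{Shipley2007DGAs} rather than \cite{Schwede2003monoidalquillen} for the definition of $\phi$, and is slightly more explicit that closedness of the monoidal structure is what makes the forgetful functor from modules preserve colimits, but otherwise the arguments coincide.
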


\begin{proof}
The category of symmetric spectra in a closed symmetric monoidal model category $\mathcal C$ is the category of  modules over a  monoid in symmetric sequences in $\mathcal C$, see Definition 2.7 in \cite{Shipley2007DGAs}. Since symmetric sequences in $\mathcal C$ is a diagram category in $\mathcal C$, the colimits in symmetric sequences  are levelwise. Furthermore, the forgetful functor from modules over a monoid to the underlying closed monoidal category preserves colimits. Therefore colimits of  symmetric spectra in $\mathcal C$ are also levelwise. 

Here, $N$ is the normalization functor $s\mathcal AB \to \mathcal Ch^+$ of the Dold-Kan correspondence, an equivalence of categories, applied levelwise. Therefore it preserves colimits. Furthermore, $\phi^*$ is the restriction of scalars functor between the categories of modules over two monoids induced by a map of these monoids in symmetric sequences in $\mathcal Ch^+$, see Page 358 of \cite{Shipley2007DGAs}. Therefore $\phi^*$ is the identity functor identity on the underlying symmetric sequences and therefore it also preserves colimits.
\end{proof}
\begin{lemma}\label{lemma adjointweakequivalences}
For every cofibrant $A$ in $Sp^{\Sigma}(\mathcal Ch^+)$ and every $B$ in $Sp^{\Sigma}(s \mathcal AB)$, a map $L(A) \to B$ is a weak equivalence if and only if its adjoint $A \to \phi^*N(B)$ is a weak equivalence. 

\end{lemma}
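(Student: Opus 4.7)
The plan is to take a fibrant replacement of $B$ and apply the standard adjoint-equivalence property of Quillen equivalences (which normally requires the target to be fibrant), provided one knows that $\phi^{*}N$ preserves all weak equivalences, not merely those between fibrant objects.

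First, I would show that $\phi^{*}N$ preserves and reflects arbitrary weak equivalences. Using the description recalled in the proof of Lemma \ref{lemma preserve coproducts}, the functor $N$ is the Dold--Kan normalization applied levelwise---an equivalence of categories $s\mathcal{AB} \to \mathcal{Ch}^+$ taking simplicial weak equivalences to quasi-isomorphisms---while $\phi^{*}$ acts as the identity on underlying symmetric sequences. Since the stable equivalences in $Sp^{\Sigma}(s\mathcal{AB})$ and $Sp^{\Sigma}(\mathcal{Ch}^+)$ are built from the underlying symmetric sequences via homotopy data compatible with Dold--Kan, $\phi^{*}N$ preserves and reflects stable equivalences.

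Second, I would take a fibrant replacement $j \co B \we B^{\mathrm{fib}}$. By 2-out-of-3, $f \co L(A) \to B$ is a weak equivalence if and only if the composite $jf \co L(A) \to B^{\mathrm{fib}}$ is. Since $L$ is left Quillen, $L(A)$ is cofibrant; since $(L,\phi^{*}N)$ is a Quillen equivalence, $jf$ is a weak equivalence if and only if its adjoint $\phi^{*}N(j) \circ \tilde{f} \co A \to \phi^{*}N(B^{\mathrm{fib}})$ is. By the first step, $\phi^{*}N(j)$ is a weak equivalence, so a further application of 2-out-of-3 shows that this composite is a weak equivalence if and only if $\tilde{f}$ is. Chaining these equivalences yields the lemma.

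The main obstacle is the first step. While the preservation-and-reflection of weak equivalences by $\phi^{*}N$ is morally clear from the Dold--Kan correspondence, its verification requires unpacking the specific stable model structures on symmetric spectra used in \cite{Shipley2007DGAs} and checking that the levelwise equivalence of underlying categories transports stable equivalences. This part introduces no new ideas beyond the standard Dold--Kan machinery and the construction of $(L,\phi^{*}N)$ itself, but it is the least formal portion of the argument and where all the model-categorical care is needed.
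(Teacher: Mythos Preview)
Your approach is essentially identical to the paper's: both take a fibrant replacement of $B$, use that $\phi^*N$ preserves all weak equivalences to transport the replacement map, invoke the Quillen-equivalence condition on the resulting cofibrant-to-fibrant adjoint pair, and finish with 2-out-of-3. The paper simply asserts that $\phi^*N$ preserves all weak equivalences without the Dold--Kan justification you sketch, and it does not use (nor need) reflection of weak equivalences---only preservation is required for the argument.
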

\begin{proof}
This follows from the fact that $\phi^*N$ preserves all weak equivalences. Let $B \we fB$ be a fibrant replacement of $B$. Adjoint of the composite $L(A) \to B \we fB$ is given by the composite $A \to \phi^*N(B) \we \phi^*N(fB)$ whose first map is the adjoint of the map $L(A)\to B$. Because $(L,\phi^*N)$ is a Quillen equivalence, the first composite is a weak equivalence if and only if the second composite is a weak equivalence. The result follows by the 2-out-of-3 property of weak equivalences.
\end{proof}

The following lemma takes care of the middle step in the zig-zag of Quillen equivalences between the model categories of  $\hz$-algebras and DGAs. Note that since $(L,\phi^*N)$ is a weak monoidal Quillen pair, $\phi^*N$ is a lax monoidal functor, see Definition 3.3 of \cite{Schwede2003monoidalquillen}.  Therefore, $\phi^*N$ carries monoids to monoids. In particular, $\phi^* N(\vee_{m \in M_{-}} \Sigma^{\lvert m \rvert} \I_1)$ is a monoid.

\begin{lemma}\label{leamma middle quillen equivalence}
In $Sp^{\Sigma}(\mathcal Ch^+)$, $\phi^* N(\vee_{m \in M_{-}} \Sigma^{\lvert m \rvert} \I_1)$ and $\vee_{m \in M_{-}} \Sigma^{\lvert m \rvert} \I_2$ are weakly equivalent as monoids. 
\end{lemma}

\begin{proof}
 By Lemma \ref{lemma preserve coproducts}, $\phi^*N$ preserves coproducts. Therefore, there is an isomorphism
\begin{equation}\label{eq iso of monoids phi star n}
\phi^* N(\vee_{m \in M_{-}} \Sigma^{\lvert m \rvert} \I_1) \cong \vee_{m \in M_{-}}\phi^* N( \Sigma^{\lvert m \rvert} \I_1).
\end{equation}
Similar to Construction \ref{constr for graded monoids}, the object on the right hand side above carries a canonical monoid structure given by the multiplication on $M$ and the lax monoidal structure of $\phi^*N$. Namely, the multiplication map 
\begin{equation*} 
 \vee_{m \in M_{-}}\phi^* N( \Sigma^{\lvert m \rvert} \I_1) \wedge \vee_{n \in M_{-}}\phi^* N( \Sigma^{\lvert n \rvert} \I_1) \to \vee_{m \in M_{-}}\phi^* N( \Sigma^{\lvert m \rvert} \I_1)
\end{equation*}
is given (on the cofactor corresponding to $(m,n) \in M \times M$) by the composite
\[\phi^* N( \Sigma^{\lvert m \rvert} \I_1) \wdg \phi^* N( \Sigma^{\lvert n \rvert} \I_1) \to \phi^* N( \Sigma^{\lvert m \rvert} \I_1 \wdg  \Sigma^{\lvert n \rvert} \I_1) = \phi^* N( \Sigma^{\lvert mn \rvert} \I_1) \]
followed by the inclusion of the cofactor corresponding to $mn \in M$ if $mn \neq 0$ and given by the zero map if $mn = 0$. Note that the  map above is the lax monoidal structure map of $\phi^*N$ and the equality above follows by our definition of  $\Sigma^k -$. Furthermore, one checks using this definition that the isomorphism in \eqref{eq iso of monoids phi star n}  is an isomorphism of monoids. Therefore, in order to prove the lemma, it is sufficient to show that there is an isomorphism of monoids between $\vee_{m \in M_{-}}\phi^* N( \Sigma^{\lvert m \rvert} \I_1)$ and $\vee_{m \in M_{-}} \Sigma^{\lvert m \rvert} \I_2 $.

There is a weak equivalence  $L(\I_2) \we  \I_1$, see  \cite[3.6]{Schwede2003monoidalquillen}. Therefore, there is also a weak equivalence $ \varphi \co L(\Sigma \I_2) \we  \Sigma \I_1$ by Lemma \ref{lem weak equivalence between suspensions}. Let
\[\psi \co \Sigma \I_2 \to \phi^*N(\Sigma \I_1)\]
be the adjoint of $\varphi$. 

Let $\psi^0$ denote the unit $\I_2 \to \phi^*N(\I_1)$ of the lax monoidal structure  of $\phi^*N$ and let $\psi^1$ denote $\psi$. For $\ell>1$, let $\psi^\ell$ denote the composite:
\[\psi^{ \ell} \co \Sigma^\ell \I_2 = (\Sigma \I_2)^{\wedge \ell} \xrightarrow{\psi^{\wdg \ell}} (\phi^*N (\Sigma \I_1))^{\wedge \ell} \to \phi^*N((\Sigma \I_1)^{\wedge \ell}) = \phi^*N(\Sigma^\ell \I_1), \]
where the equalities follow  by our definition of $\Sigma^\ell$ and the second map is obtained by successive applications of the  transformation $\phi^*N(-) \wedge \phi^*N(-) \to \phi^*N(- \wedge -)$ that is a part of the lax monoidal structure of $\phi^*N$, see  \cite[3.3]{Schwede2003monoidalquillen}.

Now we define a map of monoids
\[\Psi: \vee_{m \in M_{-}} \Sigma^{\lvert m \rvert} \I_2 \to  \vee_{m \in M_{-}}\phi^* N( \Sigma^{\lvert m \rvert} \I_1)\]
 as the coproduct of $\psi^{\lvert m \rvert}$ over $m \in M_{-}$.  By the associativity and the unitality of the lax monoidal structure on $\phi^*N$ and by the fact that right adjoint functors preserve the zero maps between pointed categories, $\Psi$ is a map of monoids, see  \cite[6.4.1]{borceux1994handbook2}.

Finally, we need to show that $\Psi$ is a weak equivalence. By Lemmas \ref{lemma preserve coproducts} and \ref{lemma adjointweakequivalences}, it is sufficient to show that the adjoint of $\Psi$ is a weak equivalence. Since both $\phi^*N$ and $L$ preserve coproducts and since $\Psi$ is a coproduct of maps $\psi^{\lvert m \rvert}$, the adjoint of $\Psi$ is a coproduct of the adjoints of the maps $\psi^{\lvert m \rvert}$. Note that a coproduct of weak equivalences of cofibrant objects is again a weak equivalence  by  \cite[4.7]{White2017modelstructuresoncommutative}. Since the adjoint of $\psi^{\ell}$ is a map between cofibrant objects, it is sufficient to show that the adjoint of $\psi^{\ell}$ is a weak equivalence for each $\ell \geq 0$.

For the case $\ell=0$, we have that the adjoint of $\psi^0$ 
is the weak equivalence $L(\I_2) \we \I_1$ mentioned above. For $\ell=1$, the adjoint of  $\psi^1$ is  the map $\varphi$  above which is also a weak equivalence. 

We show the $\ell=2$ case and the rest follows similarly. In particular, we show that the adjoint to the composite defining $\psi^2$ 
\begin{equation*}
\begin{tikzcd}
\psi^2 \co \Sigma \I_2 \wedge \Sigma \I_2 \ar[r,rightarrow,"\psi \wedge \psi"] & \phi^*N(\Sigma \I_1) \wedge \phi^*N(\Sigma \I_1) \to \phi^*N(\Sigma \I_1 \wedge \Sigma \I_1)
\end{tikzcd}
\end{equation*}
is the composite map 
\begin{equation} \label{compositemap}
\begin{tikzcd}
   L(\Sigma \I_2 \wedge \Sigma \I_2) \ar[r,rightarrow,"c_L"]\ar[r,swap,"\simeq"] &\ar[r,swap,"\simeq"] L(\Sigma \I_2) \wedge L(\Sigma \I_2) \ar[r,rightarrow,"\varphi \wedge \varphi"] & \Sigma \I_1 \wedge \Sigma \I_1.
   \end{tikzcd}
\end{equation}
The first map in this composite is the comonoidal map induced by the lax monoidal structure of $\phi^*N$ and by Definition 3.6 of \cite{Schwede2003monoidalquillen}, this is a weak equivalence. Furthermore, the second map in the composite is a smash product of weak equivalences between cofibrant objects; and therefore, it is also a weak equivalence. This shows that the composite is a weak equivalence. 

To show that $\psi^2$ is the adjoint to this composite, first note that by the discussion on Equation (3.4) in \cite{Schwede2003monoidalquillen}, the comonoidal  map $c_L$ is the adjoint of the composite map 
\[ \Sigma \I_2 \wedge \Sigma \I_2 \to \phi^*N L(\Sigma \I_2) \wedge \phi^* N L (\Sigma \I_2) \to \phi^*N(L(\Sigma \I_2) \wedge L(\Sigma \I_2))\]
where the first map is induced by the unit of the adjunction and the second map comes from the lax monoidal structure on $\phi^*N$. Considering the adjoint of the composite \eqref{compositemap} as the adjoint of the first map $c_L$ in the composite followed by $\phi^*N(\varphi \wedge \varphi)$, we obtain that the adjoint of \eqref{compositemap} is given by the composite
\[ \Sigma \I_2 \wedge \Sigma \I_2 \to \phi^*N L(\Sigma \I_2) \wedge \phi^* N L (\Sigma \I_2) \to \phi^*N(L(\Sigma \I_2) \wedge L(\Sigma \I_2)) \to \phi^*N(\Sigma \I_1 \wedge \Sigma \I_1).\]
By the naturality of the monoidal transformation  of $\phi^*N$, this composite is equal to the canonical composite
\[ \Sigma \I_2 \wedge \Sigma \I_2 \to \phi^*N L(\Sigma \I_2) \wedge \phi^* N L (\Sigma \I_2) \to \phi^*N(\Sigma \I_1) \wedge \phi^*N( \Sigma \I_1) \to \phi^*N(\Sigma \I_1 \wedge \Sigma \I_1).\]
Note that the composition of the first two maps is the smash product of adjoints of $\varphi$ which is $\psi \wedge \psi$. Therefore, this composite is precisely the composite that defines $\psi^2$ above. This shows that the adjoint of $\psi^2$ is  the composite weak equivalence in \eqref{compositemap}.
\end{proof}

\subsection{Proof of Theorem \ref{thm formal are extension}.}

We prove the following proposition which provides an explicit description of the $HR$-algebra corresponding to the formal $R$-DGA with homology $R \langle M \rangle$. 

Recall from Remark \ref{rem on graded monoid rings} that an $R$-DGA satisfying the hypothesis of Theorem \ref{thm formal are extension} is a formal $R$-DGA with homology $R\langle M \rangle$ for some monoid $M$ in non-negatively graded pointed sets. The following proposition states that such an $R$-DGA is $R$-extension. In other words,  this  proposition implies Theorem \ref{thm formal are extension}.
\begin{proposition} \label{prop formal hzalg}
The $R$-DGA corresponding to the $HR$-algebra $\vee_{m \in M_{-}} \Sigma^{\lvert m \rvert} HR$ is the formal $R$-DGA with homology $R \langle M \rangle$. Furthermore, there is an equivalence of $HR$-algebras
\[\vee_{m \in M_{-}} \Sigma^{\lvert m \rvert} HR \simeq HR \wedge c( \vee_{m \in M_{-}} \Sigma^{\lvert m \rvert} \Sp)\]
where $c$ denotes the cofibrant replacement functor in $\sph$-algebras.
\end{proposition}

\begin{proof}
For the first statement, we discuss the case $R=\Z$, the proof for general $R$ follows similarly. The first statement is a consequence of Lemmas \ref{lemma first and last quillen equivalences} and  \ref{leamma middle quillen equivalence}.

Now we prove the second statement. Recall that $HR \wdg - $ is a symmetric monoidal functor between $\sph$-modules and $HR$-modules. Therefore, the second statement is consequence of Lemma \ref{lem strong monoidal preserve the monoid construction}.
\end{proof}

\appendix
\section{}

Here, we provide a short discussion on the compatibility of Definitions \ref{def extension} and \ref{def einfty extension}. 

If we choose our $E_\infty$ operad to be the Barrat-Eccles operad, then every $E_\infty$ $R$-DGA is at the same time an $R$-DGA, see \cite[Section 1.1.1]{Berger2004combinatorialoperadactions}. Let $X$ be an $R$-extension $E_\infty$ $R$-DGA and let $U(X)$ denote its underlying $R$-DGA. The canonical compatibility question  asks if $U(X)$ is $R$-extension as an $R$-DGA. In other words, we want to know if every $R$-extension $E_\infty$ $R$-DGA forgets to an $R$-extension $R$-DGA. 
     
     Let $HX$ denote the commutative $HR$-algebra corresponding to $X$ and let $HU(X)$ denote the $HR$-algebra corresponding to $U(X)$. For the moment, assume that $HX$ is weakly equivalent to $HU(X)$ as an $HR$-algebra. Under this assumption, we conclude that $U(X)$ is $R$-extension. To see this, let $HX\simeq HR \wdg E$ for some cofibrant commutative $\sph$-algebra $E$ and let $c$ denote the cofibrant replacement functor in $\sph$-algebras. Since cofibrant (commutative) $\sph$-algebras forget to cofibrant $\sph$-modules \cite{Shipley2004convenient,schwede2000algebrasandmodules} and since the left Quillen functor $HR \wdg - $ preserves weak equivalences between cofibrant objects, we deduce that $HR \wdg E$ is equivalent to $HR \wdg cE$ in $HR$-algebras. Hence, $HU(X)$ is weakly equivalent to $HR \wdg cE$ and therefore $U(X)$ is $R$-extension as desired.
     
     However, it is not known whether $HX$ and $HU(X)$ are weakly equivalent in $HR$-algebras. In other words, it is not known if the zig-zag of Quillen equivalences between $HR$-algebras  and $R$-DGAs in \cite{Shipley2007DGAs} is compatible with the zig-zag of Quillen equivalences between commutative $HR$-algebras and $E_\infty$ $R$-DGAs in \cite{richter2017algebraicmodel}. In conclusion, if we assume that these Quillen equivalences are compatible, then the Definitions \ref{def extension} and \ref{def einfty extension} are also compatible in the sense described above.


\begin{thebibliography}{10}

\bibitem{bayindir2018topeqeinfty}
Haldun~\"{O}zg\"{u}r Bay{\i}nd{\i}r.
\newblock Topological equivalences of {E}-infinity differential graded
  algebras.
\newblock {\em Algebr. Geom. Topol.}, 18(2):1115--1146, 2018.

\bibitem{bayindir2019dgaswithpolynomial}
Haldun~\"{O}zg\"{u}r Bay{\i}nd{\i}r.
\newblock {DGAs with polynomial homology}.
\newblock {\em arXiv preprint arXiv:1911.01089}, 2019.

\bibitem{bayindir2020algkthryofTHHFp}
Haldun~{\"O}zg{\"u}r Bay{\i}nd{\i}r and Tasos Moulinos.
\newblock Algebraic {$K$}-theory of {T}{H}{H}$(\mathbb{F}_p)$.
\newblock {\em arXiv preprint arXiv:2009.05827}, 2020.

\bibitem{Berger2004combinatorialoperadactions}
Clemens Berger and Benoit Fresse.
\newblock Combinatorial operad actions on cochains.
\newblock {\em Math. Proc. Cambridge Philos. Soc.}, 137(1):135--174, 2004.

\bibitem{borceux1994handbook2}
Francis Borceux.
\newblock {\em Handbook of categorical algebra. 2}, volume~51 of {\em
  Encyclopedia of Mathematics and its Applications}.
\newblock Cambridge University Press, Cambridge, 1994.
\newblock Categories and structures.

\bibitem{brunerh}
R.~R. Bruner, J.~P. May, J.~E. McClure, and M.~Steinberger.
\newblock {\em {$H_\infty $} ring spectra and their applications}, volume 1176
  of {\em Lecture Notes in Mathematics}.
\newblock Springer-Verlag, Berlin, 1986.

\bibitem{dugger2007topologicalequiv}
Daniel Dugger and Brooke Shipley.
\newblock Topological equivalences for differential graded algebras.
\newblock {\em Adv. Math.}, 212(1):37--61, 2007.

\bibitem{ekmm}
A.~D. Elmendorf, I.~Kriz, M.~A. Mandell, and J.~P. May.
\newblock {\em Rings, modules, and algebras in stable homotopy theory},
  volume~47 of {\em Mathematical Surveys and Monographs}.
\newblock American Mathematical Society, Providence, RI, 1997.
\newblock With an appendix by M. Cole.

\bibitem{hesseholt1997ktheoryofwitt}
Lars Hesselholt and Ib~Madsen.
\newblock On the {$K$}-theory of finite algebras over {W}itt vectors of perfect
  fields.
\newblock {\em Topology}, 36(1):29--101, 1997.

\bibitem{hovey2000symmetricspectra}
Mark Hovey, Brooke Shipley, and Jeff Smith.
\newblock Symmetric spectra.
\newblock {\em J. Amer. Math. Soc.}, 13(1):149--208, 2000.

\bibitem{keller2006ondifferential}
Bernhard Keller.
\newblock On differential graded categories.
\newblock In {\em International {C}ongress of {M}athematicians. {V}ol. {II}},
  pages 151--190. Eur. Math. Soc., Z\"{u}rich, 2006.

\bibitem{krause2019b}
Achim Krause and Thomas Nikolaus.
\newblock B{\"o}kstedt periodicity and quotients of {DVR}s.
\newblock {\em arXiv preprint arXiv:1907.03477}, 2019.

\bibitem{lindenstrauss2000thhofalgebras}
Michael Larsen and Ayelet Lindenstrauss.
\newblock Topological {H}ochschild homology of algebras in characteristic
  {$p$}.
\newblock {\em J. Pure Appl. Algebra}, 145(1):45--58, 2000.

\bibitem{milnor1958steenrod}
John Milnor.
\newblock The {S}teenrod algebra and its dual.
\newblock {\em Annals of Mathematics}, 67(1):150--171, 1958.

\bibitem{richter2017algebraicmodel}
Birgit Richter and Brooke Shipley.
\newblock An algebraic model for commutative {$H\mathbb{Z}$}-algebras.
\newblock {\em Algebr. Geom. Topol.}, 17(4):2013--2038, 2017.

\bibitem{vogt1998adjoining}
R.~Schw\"{a}nzl, R.~M. Vogt, and F.~Waldhausen.
\newblock Adjoining roots of unity to {$E_\infty$} ring spectra in good
  cases---a remark.
\newblock In {\em Homotopy invariant algebraic structures ({B}altimore, {MD},
  1998)}, volume 239 of {\em Contemp. Math.}, pages 245--249. Amer. Math. Soc.,
  Providence, RI, 1999.

\bibitem{schwede2007untitled}
Stefan Schwede.
\newblock An untitled book project about symmetric spectra.
\newblock 2007.

\bibitem{Schwede2003monoidalquillen}
Stefan Schwede and Brooke Shipley.
\newblock Equivalences of monoidal model categories.
\newblock {\em Algebr. Geom. Topol.}, 3:287--334, 2003.

\bibitem{schwede2000algebrasandmodules}
Stefan Schwede and Brooke~E. Shipley.
\newblock Algebras and modules in monoidal model categories.
\newblock {\em Proc. London Math. Soc. (3)}, 80(2):491--511, 2000.

\bibitem{Shipley2004convenient}
Brooke Shipley.
\newblock A convenient model category for commutative ring spectra.
\newblock In {\em Homotopy theory: relations with algebraic geometry, group
  cohomology, and algebraic {$K$}-theory}, volume 346 of {\em Contemp. Math.},
  pages 473--483. Amer. Math. Soc., Providence, RI, 2004.

\bibitem{Shipley2007DGAs}
Brooke Shipley.
\newblock {$H\mathbb{Z}$}-algebra spectra are differential graded algebras.
\newblock {\em Amer. J. Math.}, 129(2):351--379, 2007.

\bibitem{stanley1997dissertation}
Donald~Westgarth Stanley.
\newblock {\em Closed model categories and monoidal categories}.
\newblock ProQuest LLC, Ann Arbor, MI, 1997.
\newblock Thesis (Ph.D.)--University of Toronto (Canada).

\bibitem{White2017modelstructuresoncommutative}
David White.
\newblock Model structures on commutative monoids in general model categories.
\newblock {\em J. Pure Appl. Algebra}, 221(12):3124--3168, 2017.

\end{thebibliography}
\end{document}